\newtheorem{thm}{Theorem}[section]
\newtheorem{lem}[thm]{Lemma}
\newtheorem{prop}[thm]{Proposition}
\theoremstyle{definition}
\newtheorem{defn}[thm]{Definition}
\theoremstyle{remark}
\newtheorem{rem}[thm]{Remark}
\newtheorem{ex}[thm]{Example}
\numberwithin{equation}{section}
\begin{document}

\title[State-crossings in random walks]
{Crossings states and sets of states in random walks}%
\author{Vyacheslav M. Abramov}%
\address{24 Sagan Drive, North Cranbourne, Victoria - 3977, Australia}%
\email{vabramov126@gmail.com}%

%\thanks{}%
\subjclass{60G50; 60J80; 60C05; 60K25}%
\keywords{simple random walks; level-crossings and state-crossings;
undirected state-crossings; directed state crossings;
birth-and-death processes;
Markovian single-server queues; Markov fields}%

%\date{}%
%\dedicatory{}%
%\commby{}%
% ----------------------------------------------------------------
\begin{abstract}
We consider a random walk in $\mathbb{Z}^2$ and
establish a number of results for the
distributions and expectations of the number of
usual (undirected) and specifically defined in the paper directed state-crossings and different sets of states
crossings. As well, we extend the results to $d$-dimensional random walks, $d\geq2$, in bounded areas.
\end{abstract}
\maketitle
% ----------------------------------------------------------------
%\tableofcontents

\section{Introduction}
\subsection{Formulation of the problem and the literature} In this paper we study
simple random walks, a well-known object of the study in
probability theory \cite{Spitzer}. The known results on simple random walks can be found in many sources (e. g. \cite{DoyleSnell, Hughes, LyonsPeres, Spitzer}).

The present paper studies the probability distributions and expectations of the number of usual
(undirected) and specifically defined (directed) state-crossings for random walks in $\mathbb{Z}^2$.
The definitions of these notions are given later in the paper. We also consider multidimensional random walks in bounded areas.

The time parameter $t$ is a discrete (integer) parameter. At time
$t=0$, the random walk starts from $\mathbf{0}$, where $\mathbf{0}=(0, 0)$, and, after some
excursion, it visits $\mathbf{0}$ again for the first time after
$t=0$. This random stopping time of the excursion is denoted by
$\tau$. For the random walk in $\mathbb{Z}^2$ it exists with probability 1 (see e. g. \cite{Spitzer}).

Vectors and their components are denoted as follows: $\mathbf{n}=
\big(n^{(1)}, n^{(2)}\big)$, $\mathbf{n}_i=\big(n^{(1)}_i, n^{(2)}_i\big)$,
$\mathbf{S}_t=\big(S^{(1)}_t, S^{(2)}_t\big)$ and so on, by using bold font Latin (lower or upper case) letters for vectors and the italic font Latin
letters with order number indices for components. The words
\textit{vector} and \textit{state} are used interchangeably.

The random walks in $\mathbb{Z}^2$ are defined as follows.
The vector $\mathbf{S}_t=\big(S_t^{(1)}, S_t^{(2)}\big)$ denotes the state of the random walk at time $t$
and is defined recurrently as follows:
\begin{eqnarray}
\mathbf{S}_0&=&\mathbf{0},\label{eq-1.1}\\
\mathbf{S}_{t}&=&\mathbf{S}_{t-1}+\mathbf{e}_t(\mathbb{Z}^2), \quad
t\geq1,\label{eq-1.2}
\end{eqnarray}
where the random vector $\mathbf{e}_t(\mathbb{Z}^2)$ is in turn
defined as follows. Let $\mathbf{1}_{i}$ denote the vector, the
$i$th component of which is 1, and the rest component are 0. Then,
the vector $\mathbf{e}_t(\mathbb{Z}^2)$ is one of the $4$ vectors
$\{\pm\mathbf{1}_{i}, i=1,2\}$ that is randomly chosen with
probability $1/4$ independently of the other vectors and the
history of the random walk.

The reason for the unusual notation $\mathbf{e}_t(\mathbb{Z}^2)$ is that,
the recurrence relations similar to \eqref{eq-1.1} describe a variety of different random walks,
in which there is only the difference in the definition of the vector $\mathbf{e}_t(\bullet)$.
Specifically, in the place of $\bullet$ one can use the subsets of  $\mathbb{Z}^2$ for which
the random walk is defined. For instance,
$\mathbf{e}_t(\mathbb{Z}^2_+)$, $\mathbf{e}_t([-N,N]^d)$ or
$\mathbf{e}_t([0,N]^d)$ is the notation for the increments of the
random walks in $\mathbb{Z}^2_+$, $[-N,N]^d$ or $[0,N]^d$,
respectively.

Level-crossings are widely known in probability theory and its applications, and there is an extensive bibliography on this subject. Recent paper on this subject related to the theory of random walks and diffusions can be found in
\cite{Loch, MV1, MV2}.
The aim of the present paper is to provide direct studies for the distributions of state-crossings and crossings special sets of states. As well, we obtain the results for the expectations.

The initial point of our study are level-crossings for the symmetric one-dimensional random walk. Level-crossings for that random walk are mentioned in a number of sources. The known result is the following property. Suppose that the random walk starts at 0 and, after some excursion, at the first time returns to zero again. \textit{Then for
any level $i\neq0$, the expected number of crossings level $i$
is the same for all $i$ and equal to $1$.} This result can be found in a number of textbooks such as \cite{F, Durrett, Szekeley, Wolff}. Following the title of the book by Szek\'ely \cite{Szekeley}, the aforementioned result is specified as a paradox in probability theory.

Note that the continuous time analogue of the symmetric one-dimensional random walk reflected at zero is the $M/M/1$ queueing system with equal interarrival and service times means. For the series of finite capacity $M/M/1/n$ queueing systems, the aforementioned property of random walk can be reformulated as follows. \textit{If
the expectations of interarrival and service times are equal, then
the expected number of losses during a busy period in the $M/M/1/n$
queueing system is equal to 1 for any $n$.} Surprisingly, the last claim holds true for the series of $M/G/1/n$ queueing systems with generally distributed service times \cite{Abramov4, Abramov1, Abramov2, Righter, Wolff2}. For characterization of the properties of losses in general queues see \cite{Abramov3}.

\subsection{Methodology and contribution} We adapt the level-crossings method originally used in \cite{Abramov5} and developed in \cite{Abramov6, Abramov7} for multidimensional random walks. One-dimensional particular case considered in the paper demonstrates an example for the further development of the method in the multidimensional cases. By using that level-crossings method we establish connection between the distributions of number of state-crossings and certain sets of states crossings with the distributions of the random fields defined in the paper. For the symmetric one-dimensional random walk the distribution of the number of level-crossings is expressed via the distribution of the number of offspring in a certain birth-and-death process. Although the results in the one-dimensional case cannot be considered as new, the connection between the number of level-crossings and birth-and-death processes is a novel knowledge that has not been mentioned before. The results for two- and multi-dimensional random walks all are new. The distributions of the number of state-crossings and crossings sets of states are expressed via the distributions of the special random fields introduced in the paper. It is proved that the expected number of state-crossings is equal to one for any state $\mathbf{n}$. As well, the expected number of directed state-crossings from the above and below are obtained. These results are derived based on the approach in \cite{Abramov_AoP}.

\subsection{Historical background and possible applications} Level-crossings approach goes back to the classic \textit{Doob's martingale convergence theorem} under the titles \textit{up-crossings} and \textit{down-crossings} (see \cite{Doob}). Level-crossings applications in probability theory, and in particular in queueing theory and dam systems were initiated in 1970th in a number of papers by Brill and Posner (e. g. \cite{Brill1, BrillPos}) and summarised in \cite{Brill} and by Cohen \cite{Cohen1, Cohen2}. Application to inventory systems can be found in \cite{AzouryBrill} and to credit risk models in \cite{JPS, Sezer}. Applications in statistics of stochastic processes for parameter estimates, testing hypothesis and simulation can be found in \cite{BurqJones, JonesShen, RollsJones}. Level-crossings of stationary Gaussian processes reviewed in \cite{Kratz}. The further references can be found there.

A special circle of problems associated with level-crossings of state-dependent Markovian queueing systems with or without losses has been considered in \cite{Abramov5}. Application of level-crossings to epidemic models has been given in \cite{Abramov8}. Being further developed in the present paper for multi-dimensional random walks, the method may have a number of novel applications that include complex queueing systems with losses, in which the losses from a queue can depend on parameters, the behavior of which is described by two- or multi-dimensional random walk. Another possible area of applications is multi-type epidemic models that generalize the models studied in \cite{Abramov8}.

\subsection{Outline of the paper}
The rest of the paper is organized as follows. In Section \ref{S2},
we provide necessary notation, definitions, remarks and examples. In Section \ref{S3}, we formulate the main results of the paper. In Section \ref{S4}, we formulate and prove the claim for level-crossings in the one-dimensional random walk $S_t$. The proof for the one-dimensional random walk is important, since it makes the further proofs of the main results more understandable and clearer. In Sections \ref{S5}, \ref{S6}, \ref{S7} and \ref{S8} we prove the main results of the paper. The proof of Theorem \ref{t4} given in Section \ref{S8} is independent of the proofs of Theorems \ref{t2} and \ref{t1}. In Section \ref{S9}, the results of the paper are developed for $d$-dimensional random walks in bounded areas. In Section \ref{S10}, we discuss crossings sets and states for random walks in $\mathbb{Z}^d$ for $d\geq3$ and formulate a conjecture on the behaviour of the expected state-crossings.
In Section \ref{S_num} we provide some numerical studies for the crossings states and sets of states for the two-dimensional random walk.  In Section \ref{S11}, we resume the part of the results of the paper related to the expectations of crossings sets and states of sets. To make the paper self-contained we added the Appendix, the results of which are used to prove some key results of the paper.

\section{Notation, definitions, remarks and examples}\label{S2}

The following notation is used in the paper. For any vector
$\mathbf{n}\in\mathbb{Z}^2$, its $l_1$-norm is
\begin{equation*}\label{eq-2.1}
\| \mathbf{n} \|=|n^{(1)}|+|n^{(2)}|
\end{equation*}

For $\mathbf{n}_1$, $\mathbf{n}_2,\ldots$, $\mathbf{n}_l\in\mathbb{Z}^2$, let $\mathcal{Z}=\{\mathbf{n}_1$, $\mathbf{n}_2,\ldots$,
$\mathbf{n}_l\}$ be a set of these vectors. If the vectors $\mathbf{n}_1$,
$\mathbf{n}_2,\ldots$, $\mathbf{n}_l$ all have the same norm $n$, then
we write $\|\mathcal{Z}\|=n$. In this case the set $\mathcal{Z}$ is
called \textit{normed set}.

The vector $|\mathbf{n}|=\left(|n^{(1)}|, |n^{(2)}|\right)$ is called \textit{module} of the vector
$\mathbf{n}$. An important example of a normed set $\mathcal{Z}$ is the set
$$
\mathcal{X}(\mathbf{n})=\{\mathbf{m}: \mathbf{m}=|\mathbf{n}|\}.
$$

For instance, for vectors $(1,2)$ and $(0,1)$ we
have $\mathcal{X}\big((1,2)\big)$ $=\{(1,2)$, $(-1,2)$, $(1,-2)$, $(-1,-2)\}$, and
$\mathcal{X}\big((0,1)\big)$ $=\{(0,1)$, $(0,-1)\}$. Two other important examples of normed sets $\mathcal{Z}$ are the
set of all vectors in $\mathbb{Z}^2$ with norm $n$ and the set of
all vectors in $\mathbb{Z}^2_+$ with norm $n$. These sets are
denoted by $\mathcal{N}(n)$ and $\mathcal{N}^+(n)$, respectively.

Let $\mathbf{n}$ be a vector. A vector $\mathbf{m}$ is said to be a neighbor of vector $\mathbf{n}$ if
$$ \|\mathbf{m}-\mathbf{n}\|=1.$$
If, in addition, $\|\mathbf{n}\|=\|\mathbf{m}\|+1$, then the vector $\mathbf{m}$ is said to be \textit{lower} neighbor of $\mathbf{n}$. Otherwise, if $\|\mathbf{n}\|=\|\mathbf{m}\|-1$, then the vector $\mathbf{m}$ is called \textit{upper} neighbor. The set of all lower neighbors of the vector $\mathbf{n}$ is denoted $\mathcal{M}^-(\mathbf{n})$, and the set of all upper neighbors of the vector $\mathbf{n}$ is denoted $\mathcal{M}^+(\mathbf{n})$. Let $\mathbf{n}=\left(n^{(1)}, n^{(2)}\right)$. The
number of zero components of this vector is denoted by
$d_0(\mathbf{n})$. That is, $d_0(\mathbf{n})=1$ if either $n^{(1)}=0$, $n^{(2)}\neq0$  or $n^{(2)}=0$, $n^{(1)}\neq0$.

The total number of vectors in the set
$\mathcal{M}^+(\mathbf{n})\cup\mathcal{M}^-(\mathbf{n})$ is the same
for all $\mathbf{n}\in\mathbb{Z}^2$ and equal to four. The total number of vectors
in the sets $\mathcal{M}^-(\mathbf{n})$ and
$\mathcal{M}^+(\mathbf{n})$ are $d-d_0(\mathbf{n})$ and
$d+d_0(\mathbf{n})$, respectively. In Figure 1, we illustrate lower and upper neighbor vectors in two situations. In one situation, the vectors $\mathbf{k}$, $\mathbf{l}$ and $\mathbf{m}$ are upper neighbors of the vector $\mathbf{n}$ the total number of which is three, while there is only the single lower neighbor vector $\mathbf{p}$. The vector $\mathbf{n}$ is located on the boundary, so $d_0(\mathbf{n})=1$. In the other situation where the vector $\boldsymbol{c}$ is in interior,  the vectors $\mathbf{a}$ and $\mathbf{b}$ are upper neighbors of the vector $\mathbf{c}$. The two other vectors $\mathbf{d}$ and $\mathbf{e}$ are its lower neighbors.

\begin{figure}
\includegraphics[width=10cm, height=12cm]{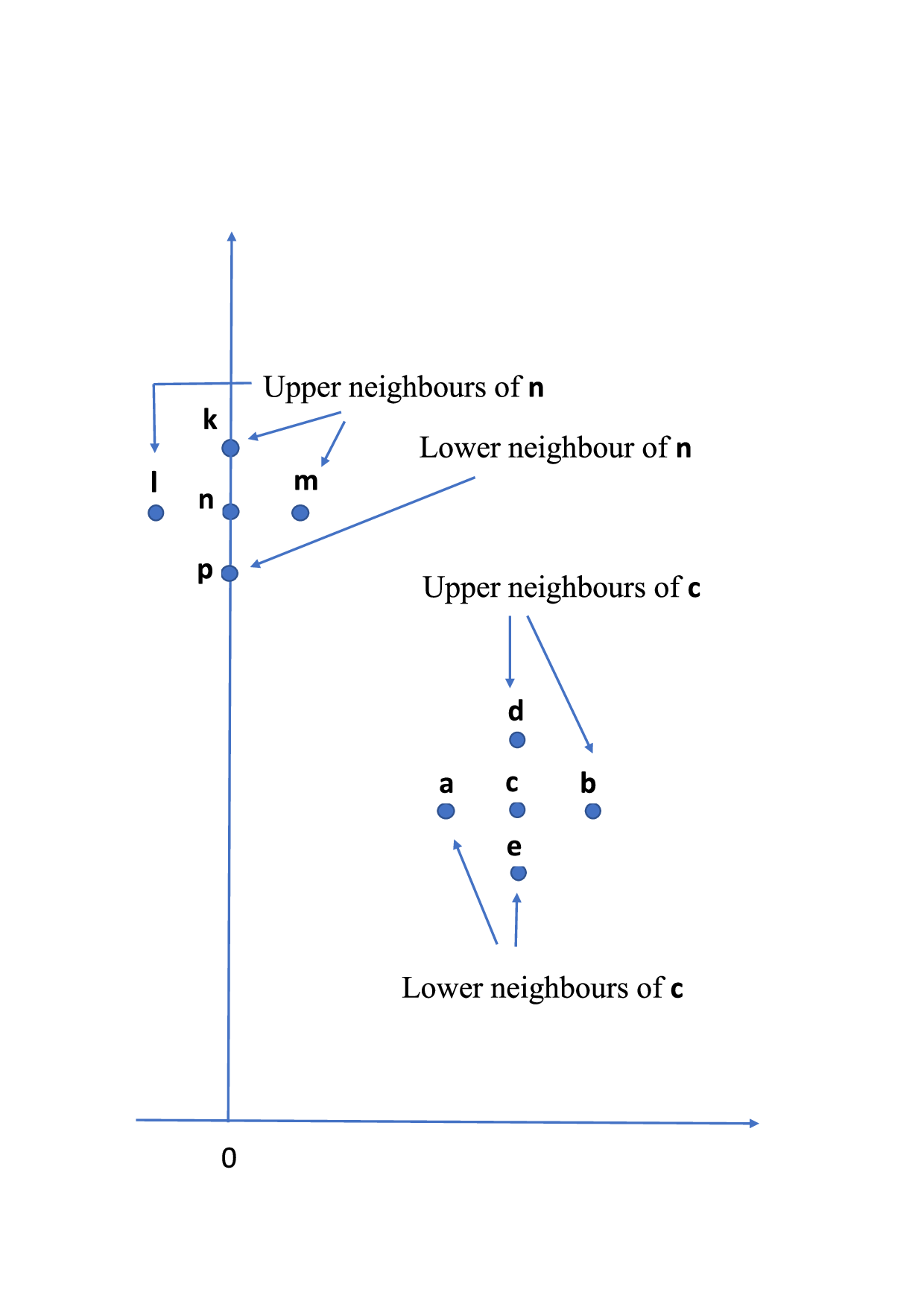}
\caption{Illustration of the lower and upper sets of vectors.}
\end{figure}

The unit vector $(1,1)$ is denoted by $\mathbf{1}$. Writing $\mathbf{n}\geq\mathbf{m}$ means that $n^{(i)}\geq m^{(i)}$
for $i=1,2$, e.g. $\mathbf{n}\geq\mathbf{1}$ means
$n^{(1)}\geq 1$ and $n^{(2)}\geq 1$; $\mathbf{n}\neq\mathbf{0}$ means that
at least one of $n^{(1)}$, $n^{(2)}$ differs from $0$. The following additional widely known conventions are as follows: $\binom{0}{0}=1$ and for $n<k$, $\binom{n}{k}=0$.

\begin{defn}\label{d3} Let
$\mathcal{Z}=\{\mathbf{n}_1$, $\mathbf{n}_2,\ldots$,
$\mathbf{n}_l\}$ be a normed set. The random
variable $f(\mathcal{Z})$ is called \textit{the number of crossings}
$\mathcal{Z}$, if there exist time instants
$0<t_1(\mathcal{Z})<t_2(\mathcal{Z})<\ldots<t_{f(\mathcal{Z})}(\mathcal{Z})<\tau$
such that $\mathbf{S}_{t_i(\mathcal{Z})}\in\mathcal{Z}$,
$i=1,2,\ldots,{f}(\mathcal{Z})$. If the set $\mathcal{Z}$ contains the
only single element $\mathbf{n}$, then the notation
$f(\mathbf{n})$ is used, and $f(\mathbf{n})$ is called \textit{the
number of state-crossings} $\mathbf{n}$.
\end{defn}

\begin{rem}It follows from Definition \ref{d3} that $f(\mathcal{Z})=\sum_{i=1}^{l}f(\mathbf{n}_i)$. That is, $f(\mathcal{Z})$ characterizes the total number of crossing the states $\mathbf{n}_1$, $\mathbf{n}_2,\ldots$, $\mathbf{n}_l$ belonging to $\mathcal{Z}$.
\end{rem}

\begin{rem}
Definition \ref{d3} does not explicitly use the norm of the set. Nevertheless, calling a set normed is important in the definition, since otherwise the definition becomes misleading.
In the following two definitions the norm of the set is used explicitly.
\end{rem}

\begin{defn}\label{d4} For a normed set
$\mathcal{Z}=\{\mathbf{n}_1$, $\mathbf{n}_2,\ldots$,
$\mathbf{n}_l\}$, $\|\mathcal{Z}\|\neq0$, the random
variable $\overrightarrow{f}(\mathcal{Z})$ is called the number of
\textit{up-directed} crossings $\mathcal{Z}$, if there exist
time instants $0< t_1(\mathcal{Z})<t_2(\mathcal{Z})<\ldots<
t_{\overrightarrow{f}(\mathcal{Z})}(\mathcal{Z})<\tau$ such that
$\mathbf{S}_{t_i(\mathcal{Z})}\in\mathcal{Z}$ and
$\|\mathbf{S}_{t_{i}(\mathcal{Z})-1}\|=\|\mathcal{Z}\|-1$,
$i=1,2,\ldots,\overrightarrow{f}(\mathcal{Z})$. If the set
$\mathcal{Z}$ contains the only single element $\mathbf{n}$, then the
notation $\overrightarrow{f}(\mathbf{n})$ is used, and
$\overrightarrow{f}(\mathbf{n})$ is called \textit{the number of
up-directed state-crossings} $\mathbf{n}$.
\end{defn}

\begin{defn}\label{d5}
For any normed set $\mathcal{Z}=\{\mathbf{n}_1$, $\mathbf{n}_2,\ldots$,
$\mathbf{n}_l\}$, the random
variable $\overleftarrow{f}(\mathcal{Z})$ is called the number of
\textit{down-directed} crossings the set $\mathcal{Z}$, if there
exist time instants  $0<
t_1(\mathcal{Z})<t_2(\mathcal{Z})<\ldots<
t_{\overleftarrow{f}(\mathcal{Z})}(\mathcal{Z})<\tau$ such that
$\mathbf{S}_{t_i(\mathcal{Z})}\in\mathcal{Z}$ and
$\|\mathbf{S}_{t_{i}(\mathcal{Z})-1}\|=\|\mathcal{Z}\|+1$,
$i=1,2,\ldots,\overleftarrow{f}(\mathcal{Z})$. If the set
$\mathcal{Z}$ contains the only single element $\mathbf{n}$, then the
notation $\overleftarrow{f}(\mathbf{n})$ is used, and
$\overleftarrow{f}(\mathbf{n})$ is called \textit{the number of
down-directed state-crossings} $\mathbf{n}$.
\end{defn}

\begin{rem}
Unlike Definition \ref{d4}, Definition \ref{d5}
implies that $\mathcal{Z}$ can be equal to $\{\mathbf{0}\}$, and
$\overleftarrow{f}(\mathbf{0})$ is defined. Notice that $f(\mathbf{0})=\overleftarrow{f}(\mathbf{0})$.
\end{rem}

\begin{rem}
For one-dimensional random walk in $\mathbb{Z}^1$ we use the scalar
notation. The random walk is denoted by $S_t$ and the numbers of
state-crossings (or, more exactly, level-crossings) are denoted by
$f(n)$, $\overrightarrow{f}(n)$ or $\overleftarrow{f}(n)$ for the
cases of undirected, up-directed and down-directed level-crossings,
respectively.
\end{rem}

\begin{defn}
Let $\mathcal{M}^+(\mathbf{n})$ be the upper set of the vector
$\mathbf{n}$, and let $\mathbf{m}_1$,
$\mathbf{m}_2$,\ldots,$\mathbf{m}_k$ be the elements of $\mathcal{M}^+(\mathbf{n})$,
where $k=2+d_0(\mathbf{n})$ is the number of its elements. The new
set containing the elements $|\mathbf{m}_1|$,
$|\mathbf{m}_2|$,\ldots,$|\mathbf{m}_k|$ is denoted by
$|\mathcal{M}^+(\mathbf{n})|$ and called \textit{positive upper set}.
\end{defn}

\begin{ex}\label{ex1}
For the vector $(-1, 0)$, we have $\mathcal{M}^+\big((-1, 0)\big)$ $=\{(-2, 0)$, $(-1, 1)$, $(-1, -1)\}$. Hence,
$$
|\mathcal{M}^+\big((-1,0)\big)|=\{(2,0), (1,1)\}.
$$
\end{ex}

Notice that while the set $\mathcal{M}^+\big((-1,0)\big)$ contains 3
elements, the number of elements in the set
$|\mathcal{M}^+\big((-1,0)\big)|$ is only 2. This is because
$|(-1,1)|=|(-1,-1)|=(1,1)$. In this case, we say that the
rank of element $(1,1)\in|\mathcal{M}^+\big((-1,0)\big)|$ is 2.

\smallskip
Below, the general definition of the rank of elements in
$|\mathcal{M}^+(\mathbf{n})|$,
$\mathbf{n}\in\mathbb{Z}^2\setminus\{\mathbf{0}\}$, is provided.

\begin{defn}\label{d2}
We say that the element $\mathbf{m}\in|\mathcal{M}^+(\mathbf{n})|$
has rank $2$ if there are two distinct elements in
$\mathcal{M}^+(\mathbf{n})$ denoted by $\mathbf{m}_1$ and
$\mathbf{m}_2$ such that $|\mathbf{m}_1|=|\mathbf{m}_2|=\mathbf{m}$.
If the set $\mathcal{M}^+(\mathbf{n})$ contains only a single
element $\mathbf{m}_1$ such that $|\mathbf{m}_1|=\mathbf{m}$, then
the rank of the element $\mathbf{m}$ is $1$. The rank of the element
$\mathbf{m}$ will be denoted by $r(\mathbf{m})$.
\end{defn}

\begin{figure}
\includegraphics[width=10cm, height=12cm]{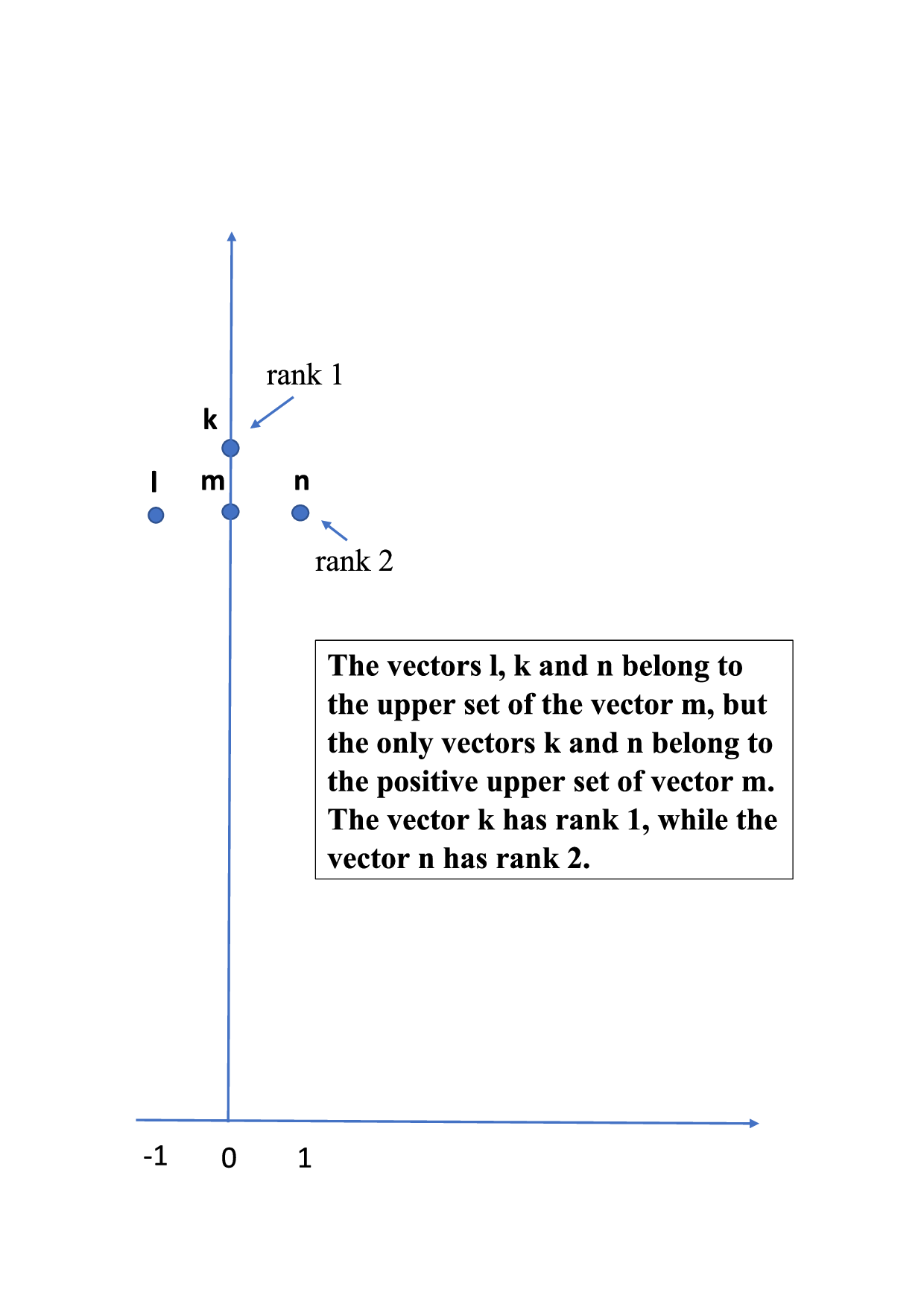}
\caption{Illustration of the set $|\mathcal{M}^+(\mathbf{n})|$ and ranks of states from Definition \ref{d2}.}
\end{figure}

The property from Definition \ref{d2} is shown in Figure 2. The points $\mathbf{k}$, $\mathbf{l}$ and $\mathbf{n}$ all belong to $\mathcal{M}^+(\mathbf{m})$. Then, the only points $\mathbf{k}$ and $\mathbf{n}$ belong to $|\mathcal{M}^+(\mathbf{m})|$. The rank of the point $\mathbf{k}$ is 1, and the rank of the point $\mathbf{n}$ is 2 because of symmetric reflection from the boundary.

\section{Main results}\label{S3}

We need to first define the random objects that are used in the formulation of the basic theorems. These random objects are the Markov fields $P_{\mathbf{n}}$ and $Q_{\mathbf{n}}$ and Markov chain $R_n$  ($n>0$). The random objects $P_{\mathbf{n}}$ and $Q_{\mathbf{n}}$ are required to describe the probability distributions of the number of crossings the state $\mathbf{n}$ and sets of states $\mathcal{X}(\mathbf{n})$, respectively. The Markov
chain $R_n$ is required to describe the distribution of the number
of crossings the set of states $\mathcal{N}(n)$. The random field $P_{\mathbf{n}}$ is defined on $\mathbb{Z}^2$, while the random field $Q_{\mathbf{n}}$ is defined on $\mathbb{Z}_+^2$.

\subsection{The Markov field $P_{\mathbf{n}}$}\label{S3.1}
%\subsection{The field $P_{\mathbf{n}}$}\label{S3.1}
The random field $P_{\mathbf{n}}$, $\mathbf{n}\in\mathbb{Z}^2$, is
defined as follows.
For any set $\mathcal{Z}\subset\mathbb{Z}^2$
denote by $\mathcal{P}(\mathcal{Z})$ the filtration of the set
$\{P_{\mathbf{m}}: \mathbf{m}\in\mathcal{Z}\}$. Then, with fixed
$P_{\mathbf{0}}=1$ for any
$\mathbf{n}\in\mathbb{Z}^2\setminus\{\mathbf{0}\}$, and all
$k=0,1,\ldots,$ the Markov property
\begin{equation}\label{eq-3.10}
\mathsf{P}\{P_{\mathbf{n}}=k~|~\mathcal{P}\big(\mathbb{Z}^2\setminus\{\mathbf{n}\}\big)\}=
\mathsf{P}\{P_{\mathbf{n}}=k~|~\mathcal{P}
\big(\mathcal{M}^-(\mathbf{n})\cup\mathcal{M}^+(\mathbf{n})\big)\}
\end{equation}
is assumed to be satisfied and, hence, the field $P_{\mathbf{n}}$ is
Markov. (Recall that the set
$\mathcal{M}^-(\mathbf{n})\cup\mathcal{M}^+(\mathbf{n})$ is the set
of all neighbor elements of the vector $\mathbf{n}$.)
Denote by
$p(\mathbf{m},\mathbf{n})$ the number of immediate (one-step)
transitions from the state $\mathbf{m}$ to the state $\mathbf{n}$, where $\mathbf{m}\in\mathcal{M}^-(\mathbf{n})\cup\mathcal{M}^+(\mathbf{n})$. So,
\begin{equation}\label{eq-3.11}
P_{\mathbf{n}}=\sum_{\mathbf{m}\in\mathcal{M}^-(\mathbf{n})\cup\mathcal{M}^+(\mathbf{n})}
p(\mathbf{m},\mathbf{n}).
\end{equation}
Then the physical meaning of $P_{\mathbf{n}}$ is the total number of transitions
to the state $\mathbf{n}$. In agreement with \eqref{eq-3.10} and \eqref{eq-3.11} the transitions $p(\mathbf{m},\mathbf{n})$ must be Markov, since any transition $p(\mathbf{m},\mathbf{n})$ depends only on the transitions to the state $\mathbf{m}$ from its neighbor states and does not depend on the states outside of $\mathcal{M}^-(\mathbf{m})\cup\mathcal{M}^+(\mathbf{m})$ .  For instance,
\begin{equation}\label{eq-3.14}
\begin{aligned}
&\mathsf{P}\{p(\mathbf{m},\mathbf{n})=k~|~p(\mathbf{m}^*,\mathbf{m})=l\}\\
&=
\mathsf{P}\{p(\mathbf{m},\mathbf{n})=k~|~p(\mathbf{m}^*,\mathbf{m})=l, p(\mathbf{m}_1,\mathbf{n}_1), p(\mathbf{m}_2,\mathbf{n}_2),\ldots\},
\end{aligned}
\end{equation}
where $\mathbf{m}^*\in\mathcal{M}^-(\mathbf{m})\cup\mathcal{M}^+(\mathbf{m})$, and $\mathbf{m}_i\notin\mathcal{M}^-(\mathbf{m})\cup\mathcal{M}^+(\mathbf{m})$ and $\|\mathbf{m}_i-\mathbf{n}_i\|=1$, $i=1,2,\ldots$ .

\smallskip
We have the following elementary properties of $p(\mathbf{m},\mathbf{n})$ and $P_{\mathbf{n}}$.
\begin{enumerate}
  \item $P_{\mathbf{0}}=1$.
  \smallskip
  \item $\mathsf{P}\{p(\mathbf{0},\mathbf{1}_1)=1\}=\mathsf{P}\{p(\mathbf{0},\mathbf{1}_2)=1\}=\mathsf{P}\{p(\mathbf{0},-\mathbf{1}_1)=1\}=\mathsf{P}\{p(\mathbf{0},-\mathbf{1}_2)=1\}
=1/4$, and
$ p(\mathbf{0},\mathbf{1}_1)+p(\mathbf{0},\mathbf{1}_2)+p(\mathbf{0},-\mathbf{1}_1)+p(\mathbf{0},-\mathbf{1}_2)=1.$
  \smallskip
  \item $\mathsf{P}\{p(\mathbf{1}_1,\mathbf{0})=1\}=\mathsf{P}\{p(\mathbf{1}_2,\mathbf{0})=1\}=\mathsf{P}\{p(-\mathbf{1}_1,\mathbf{0})=1\}=\mathsf{P}\{p(-\mathbf{1}_2,\mathbf{0})=1\}
=1/4$, and
$ p(\mathbf{1}_1,\mathbf{0})+p(\mathbf{1}_2,\mathbf{0})+p(-\mathbf{1}_1,\mathbf{0})+p(-\mathbf{1}_2,\mathbf{0})=1.$
  \smallskip
%  \item $\mathsf{P}\{p(\mathbf{m},\mathbf{n})=0~|~P_{\mathbf{m}}=0\}=1.$
%  \smallskip
  \item if $\mathbf{m}\in\mathcal{M}^-(\mathbf{n})$, $\|\mathbf{n}\|\geq2$, then
  $$
  \mathsf{P}\left\{p(\mathbf{m},\mathbf{n})=n~\Big|~\sum_{\mathbf{m}^*\in\mathcal{M}^-(\mathbf{m})}p(\mathbf{m}^*,\mathbf{m})=k\right\}=\binom{k+n-1}{n}\left(\frac{1}{4}\right)^n\left(\frac{3}{4}\right)^k.
  $$
  Note, that under the condition $\left\{\sum_{\mathbf{m}^*\in\mathcal{M}^-(\mathbf{m})}p(\mathbf{m}^*,\mathbf{m})=1\right\}$ the conditional distribution is geometric, while under the more general condition $\left\{\sum_{\mathbf{m}^*\in\mathcal{M}^-(\mathbf{m})}p(\mathbf{m}^*,\mathbf{m})=k\right\}$ it is negative binomial. This means that the condition ``generates" $k$ independent geometrically distributed random variables, and the conditional distribution is the distribution of the sum of independent geometrically distributed random variables that yields negative binomial distribution. This property is widely used in the paper.
  \smallskip
  \item if $\mathbf{m}\in\mathcal{M}^+(\mathbf{n})$, $\mathbf{n}\neq\mathbf{0}$, then
  $$
  \mathsf{P}\left\{p(\mathbf{m},\mathbf{n})=n~\Big|~\sum_{\mathbf{m}^*\in\mathcal{M}^+(\mathbf{m})}p(\mathbf{m}^*,\mathbf{m})=k\right\}=\binom{k+n-1}{n}\left(\frac{1}{4}\right)^n\left(\frac{3}{4}\right)^k.
  $$
\end{enumerate}

\subsection{The Markov field $Q_{\mathbf{n}}$}\label{S3.2}
The random field $Q_{\mathbf{n}}$, $\mathbf{n}\in\mathbb{Z}_+^2$, is
defined as follows.
For any set $\mathcal{Z}\subset\mathbb{Z}_+^2$
denote by $\mathcal{Q}(\mathcal{Z})$ the filtration of the set
$\{Q_{\mathbf{m}}: \mathbf{m}\in\mathcal{Z}\}$. Then, with fixed
$Q_{\mathbf{0}}=1$ for any
$\mathbf{n}\in\mathbb{Z}^2_+\setminus\{\mathbf{0}\}$, and all
$k=0,1,\ldots,$ the Markov property
\begin{equation}\label{eq-3.12}
\mathsf{P}\{Q_{\mathbf{n}}=k~|~\mathcal{Q}\big(\mathbb{Z}_+^2\setminus\{\mathbf{n}\}\big)\}=
\mathsf{P}\{Q_{\mathbf{n}}=k~|~\mathcal{Q}
\big(\mathcal{M}^-(\mathbf{n})\cup\mathcal{M}^+(\mathbf{n})\big)\}
\end{equation}
is assumed to be satisfied and, hence, the field $Q_{\mathbf{n}}$ is
Markov.
Denote by
$q(\mathbf{m},\mathbf{n})$ the number of immediate (one-step)
transitions from the state $\mathbf{m}$ to the state $\mathbf{n}$, where $\mathbf{m}\in\mathcal{M}^-(\mathbf{n})\cup\mathcal{M}^+(\mathbf{n})$. So,
\begin{equation}\label{eq-3.13}
Q_{\mathbf{n}}=\sum_{\mathbf{m}\in\mathcal{M}^-(\mathbf{n})\cup\mathcal{M}^+(\mathbf{n})}
q(\mathbf{m},\mathbf{n}).
\end{equation}
So, similarly to $P_{\mathbf{n}}$, the meaning of $Q_{\mathbf{n}}$ is the total number of transitions
to the state $\mathbf{n}$. In agreement with \eqref{eq-3.12} and \eqref{eq-3.13} the transitions $q(\mathbf{m},\mathbf{n})$ must be Markov, and similarly to \eqref{eq-3.14}
$$
\begin{aligned}
&\mathsf{P}\{q(\mathbf{m},\mathbf{n})=k~|~q(\mathbf{m}^*,\mathbf{m})=l\}\\
&=
\mathsf{P}\{q(\mathbf{m},\mathbf{n})=k~|~q(\mathbf{m}^*,\mathbf{m})=l, q(\mathbf{m}_1,\mathbf{n}_1), q(\mathbf{m}_2,\mathbf{n}_2),\ldots\},
\end{aligned}
$$
where $\mathbf{m}^*\in\mathcal{M}^-(\mathbf{m})\cup\mathcal{M}^+(\mathbf{m})$, and $\mathbf{m}_i\notin\mathcal{M}^-(\mathbf{m})\cup\mathcal{M}^+(\mathbf{m})$ and $\|\mathbf{m}_i-\mathbf{n}_i\|=1$, $i=1,2,\ldots$ .

\smallskip
We have the following elementary properties of $q(\mathbf{m},\mathbf{n})$ and $Q_{\mathbf{n}}$.
\begin{enumerate}
  \item $Q_{\mathbf{0}}=1$.
  \smallskip
  \item $\mathsf{P}\{q(\mathbf{0},\mathbf{1}_1)=1\}=\mathsf{P}\{q(\mathbf{0},\mathbf{1}_2)=1\}
=1/2$, and
$ q(\mathbf{0},\mathbf{1}_1)+q(\mathbf{0},\mathbf{1}_2)=1.$
  \smallskip
  \item $\mathsf{P}\{q(\mathbf{1}_1,\mathbf{0})=1\}=\mathsf{P}\{q(\mathbf{1}_2,\mathbf{0})=1\}
=1/2$, and
$ q(\mathbf{1}_1,\mathbf{0})+q(\mathbf{1}_2,\mathbf{0})=1.$
  \smallskip

  \item if $\mathbf{m}\in|\mathcal{M}^+(\mathbf{n})|$ and $\mathbf{n}\neq\mathbf{0}$, then
  $$
  \begin{aligned}
  &\mathsf{P}\left\{q(\mathbf{m},\mathbf{n})=n~\Big|~\sum_{\mathbf{m}^*\in|\mathcal{M}^+(\mathbf{m})|}q(\mathbf{m}^*,\mathbf{m})=k\right\}\\
  &=\binom{k+n-1}{n}\left(\frac{1}{4}\right)^n\left(\frac{3}{4}\right)^k.
  \end{aligned}
  $$
  \smallskip
  \item if $\mathbf{m}\in\mathcal{M}^-(\mathbf{n})$, $d_0(\mathbf{m})=1$ and $\|\mathbf{n}\|\geq2$,  then
  $$
  \begin{aligned}
  &\mathsf{P}\left\{q(\mathbf{m},\mathbf{n})=n~\Big|~\sum_{\mathbf{m}^*\in\mathcal{M}^-(\mathbf{m})}q(\mathbf{m}^*,\mathbf{m})=k\right\}\\
  &=\binom{k+n-1}{n}\left(\frac{r(\mathbf{n})}{4}\right)^n
  \left(\frac{4-r(\mathbf{n})}{4}\right)^k,
  \end{aligned}
  $$
where $r(\mathbf{n})$ is the rank of the vector $\mathbf{n}\in|\mathcal{M}^+(\mathbf{m})|$ (see Definition \ref{d2}).
  \smallskip
  \item if $\mathbf{m}\in\mathcal{M}^-(\mathbf{n})$, $d_0(\mathbf{m})=0$ and $\|\mathbf{n}\|\geq2$, then
  $$
  \begin{aligned}
  &\mathsf{P}\left\{q(\mathbf{m},\mathbf{n})=n~\Big|~\sum_{\mathbf{m}^*\in\mathcal{M}^-(\mathbf{m})}q(\mathbf{m}^*,\mathbf{m})=k\right\}\\
  &=\binom{k+n-1}{n}\left(\frac{1}{4}\right)^n\left(\frac{3}{4}\right)^k.
  \end{aligned}
  $$
\end{enumerate}
\medskip

Property (5) for $\mathsf{P}\big\{q(\mathbf{m},\mathbf{n})=n~\big|~\sum_{\mathbf{m}^*\in\mathcal{M}^-(\mathbf{m})}q(\mathbf{m^*},\mathbf{m})=k\big\}$ is seen from Figure 3. When the vector $\mathbf{m}$ belongs to the boundary and $\mathbf{n}$ is out of the boundary, the probability $\mathsf{P}\big\{q(\mathbf{m},\mathbf{n})=1~\big|~\sum_{\mathbf{m}^*\in\mathcal{M}^-(\mathbf{m})}q(\mathbf{m^*},\mathbf{m})=1\big\}$ is 1/4 rather than 3/16. This is due to the reflection mechanism giving the rate $2\times1/4=1/2$.

\begin{figure}
\includegraphics[width=10cm, height=12cm]{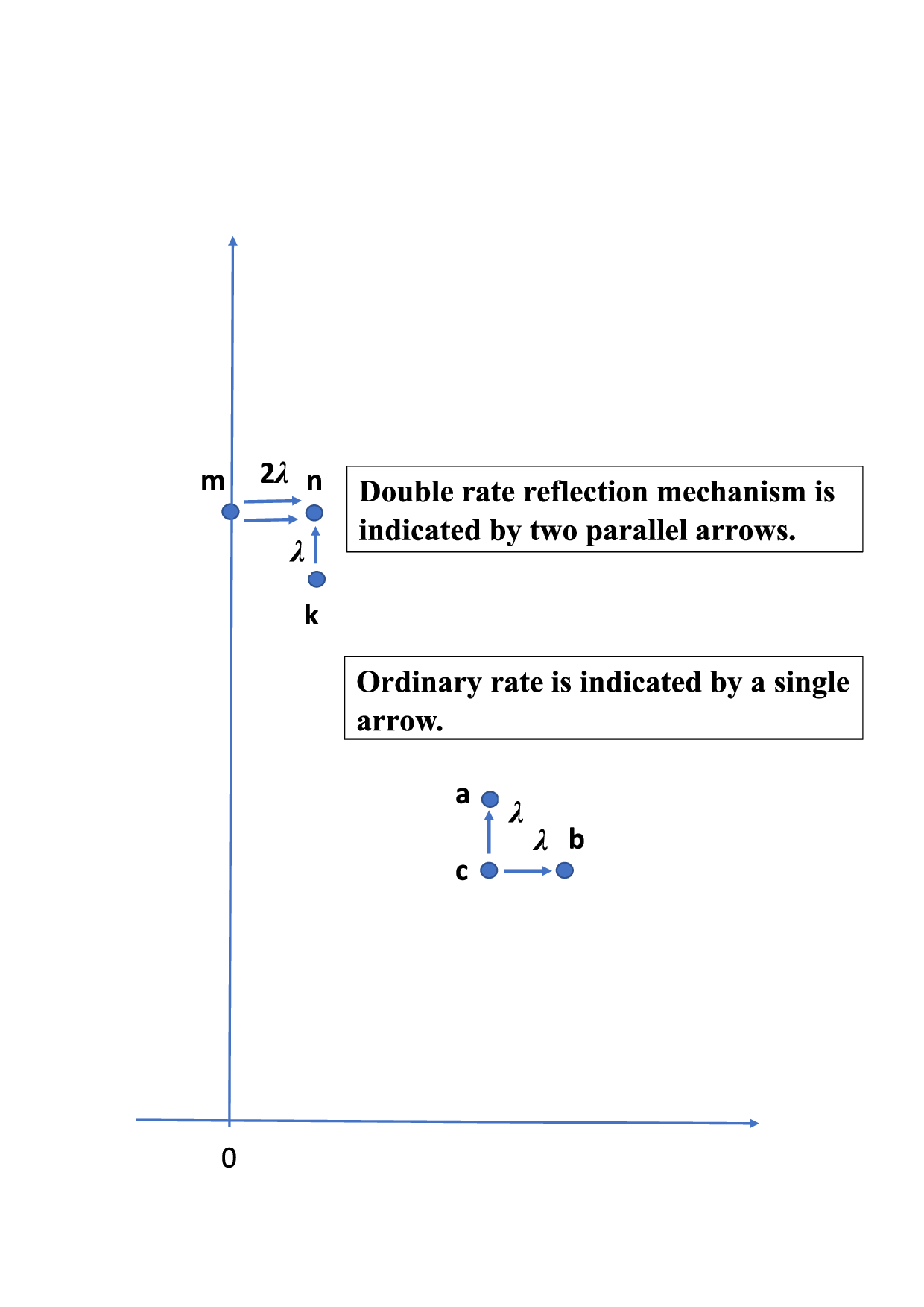}
\caption{Graphical explanation of success probabilities in Properties (4),  (5) and (6) related to the field $Q_{\mathbf{n}}$.}
\end{figure}

\subsection{The Markov chain $R_n$}\label{S3.3}

Discrete time nonnegative integer-valued Markov chain $R_n$, $n\geq1$, satisfies the following properties.
\begin{enumerate}
  \item $R_0=1$.
  \item $\mathsf{P}\{R_{n}=k|R_{n-1}=m\}=\binom{k+m-1}{k}\left(\frac{2n-1}
{4n}\right)^m\left(\frac{2n+1}{4n}\right)^k$.
%$$
\end{enumerate}

\subsection{Formulation of the results}\label{S3.4}

The main results of the paper are specified as follows. Theorem
\ref{t2} is the theorem on the probability distribution of the
number of crossings the set $\mathcal{N}(n)$. Theorem \ref{t1} is
the theorem on the probability distributions of the number of
crossings the sets $\mathcal{X}(\mathbf{n})$. Theorem \ref{t3} is
the theorem on the probability distributions of the number of
state-crossings. Theorem \ref{t4} describes the
expectations of crossings the sets of states
$\mathcal{X}(\mathbf{n})$ and states $\mathbf{n}$. Specifically,
relation \eqref{eq-3.9} of the theorem contains the important claim
that the expectation of the number of crossings any
state $\mathbf{n}$ is equal to 1, and relations \eqref{eq-3.7} and \eqref{eq-3.8} nontrivial results on the expected numbers of up- and down-directed crossings of state $\mathbf{n}$.

\begin{thm}\label{t2}
For any integer $n\geq1$ and all $k=0,1,\ldots$,
\begin{equation}\label{eq-2.11}
\mathsf{P}\left\{\overrightarrow{f}[\mathcal{N}(n)]=k\right\}
=\mathsf{P}\left\{R_{n-1}=k
\right\},
\end{equation}
\begin{equation}\label{eq-2.12}
\mathsf{P}\left\{\overleftarrow{f}[\mathcal{N}(n)]=k\right\}
=\mathsf{P}\left\{R_{n}=k
\right\},
\end{equation}
\begin{equation}\label{eq-2.10}
\mathsf{P}\{f[\mathcal{N}(n)]=k\}=\mathsf{P}\left\{R_{n-1}+R_n=k
\right\}.
\end{equation}
\end{thm}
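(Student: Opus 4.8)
The plan is to reduce the statement about crossings of the normed sphere $\mathcal{N}(n)$ in the $d$-dimensional P\'olya walk (conditioned on return to the origin) to the branching-process description of the Markov process $R_n$ introduced in Section~3. The key conceptual bridge is that, conditioned on $\tau<\infty$, the excursion from $\mathbf 0$ back to $\mathbf 0$ can be encoded by the family of transition counts $\{p(\mathbf m,\mathbf n)\}$, and these counts, when aggregated over the sphere $\|\mathbf n\|=n$, evolve in $n$ exactly like the offspring counts of the branching process underlying $R_n$. So the proof has three movements: (1) set up the bijection between finite excursions and the Markov field $P_{\mathbf n}$ restricted to a finite support; (2) show that the aggregated up-crossing count $\overrightarrow f[\mathcal N(n)]=\sum_{\|\mathbf n\|=n}\overrightarrow f(\mathbf n)$ equals the total number of one-step transitions from the sphere of radius $n-1$ into the sphere of radius $n$; (3) identify the conditional law of that aggregated count with $R_{n-1}$ via the explicit conditional distributions given for $P_{\mathbf n}$ in Section~3 and the combinatorial identities defining $C(n)$ and $C_0(n)$.

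First I would make precise the excursion-to-field correspondence. Conditioned on $\tau<\infty$, a path $\mathbf S_0,\dots,\mathbf S_\tau$ is a closed walk based at $\mathbf 0$; let $p(\mathbf m,\mathbf n)$ be the number of indices $t$ with $\mathbf S_{t-1}=\mathbf m$, $\mathbf S_t=\mathbf n$. Conservation of flow at each vertex forces $P_{\mathbf n}=\sum_{\mathbf m}p(\mathbf m,\mathbf n)=\sum_{\mathbf m}p(\mathbf n,\mathbf m)$ for every $\mathbf n$, and the boundary normalization at $\mathbf 0$ is exactly the one imposed on $P_{\mathbf n}$. The content I would establish is that, under the uniform-step P\'olya dynamics conditioned on return, the joint law of $\{p(\mathbf m,\mathbf n)\}$ is precisely the law of the Markov field $P_{\mathbf n}$ as axiomatized in Section~3 — this is where the geometric/cycle-structure counting happens: given that a state $\mathbf m^*$ at level $\|\mathbf n\|-1$ is entered once (i.e.\ $\pi^-(\mathbf m^*)=1$), each of its outgoing steps independently picks one of the $2d$ directions, of which $d-d_0(\mathbf n)$ lead ``up along the right coordinate'' toward a given neighbour $\mathbf n$ on the sphere, giving the stated geometric$\left(\tfrac{1}{d-d_0(\mathbf n)+1}\right)$ law for $p(\mathbf m^*,\mathbf n)$, and the branching/convolution structure for $\pi^-(\mathbf m^*)=m$ follows by summing independent geometric contributions. (This is the standard ``each visit spawns an independent geometric number of revisits'' argument that turns a conditioned recurrent walk into a branching process.)

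Next I would assemble the sphere-level recursion. Writing $T_n:=\sum_{\|\mathbf m^*\|=n-1}\ \sum_{\|\mathbf n\|=n}p(\mathbf m^*,\mathbf n)$ for the total number of up-steps from sphere $n-1$ to sphere $n$, I claim $T_n=\overrightarrow f[\mathcal N(n)]$ by Definition~\ref{d4} (every visit to a state on sphere $n$ whose predecessor lies on sphere $n-1$ is counted exactly once, and summing over the sphere is summing these over all target states), and also $T_n$ governs the total ``population at level $n$.'' Conditionally on $T_n=m$, each of the $m$ arrivals is distributed over the outgoing ``up'' edges, and the total offspring $T_{n+1}$ is a sum of $m$ i.i.d.\ copies of the one-arrival offspring. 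The one-arrival offspring distribution aggregated over all of sphere $n$ is computed using the counts in Section~3: of the $2d$ outgoing directions from a generic level-$n$ state, the number pointing to level $n+1$ and the number pointing to level $n-1$ are tabulated by $C(n)$ and $C_0(n)$, which by the displayed identity $\sum_i 2^i\binom di\binom{n-1}{i-1}=\frac1{2d}[C_0(n)+2C(n)]$ count, respectively, the total nonzero and (half the) zero coordinate slots over all norm-$n$ vectors — precisely the slots whose increment raises the norm. This yields the one-step law $\mathsf P\{R_{n+1}=k\mid R_n=1\}=\frac{C(n)}{2C(n)+C_0(n)}\bigl(\frac{C(n)+C_0(n)}{2C(n)+C_0(n)}\bigr)^k$, matching the definition of $R_n$, so $\overrightarrow f[\mathcal N(n)]=T_n\stackrel d= R_{n-1}$ (the index shift because the walk's first step creates the ``generation $0$'' population on sphere $1$). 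Then \eqref{eq-2.12} follows by the symmetry between up- and down-crossings (each up-step from $n-1$ to $n$ is matched, by flow conservation on the excursion, with a down-step, and the down-crossings of $\mathcal N(n)$ are the up-crossings of $\mathcal N(n+1)$ ``seen from above,'' giving $R_n$ instead of $R_{n-1}$), and \eqref{eq-2.10} follows since $f[\mathcal N(n)]=\overrightarrow f[\mathcal N(n)]+\overleftarrow f[\mathcal N(n)]$ with the two summands being $R_{n-1}$ and $R_n$ built from disjoint, conditionally independent step-families (up-steps into the sphere versus down-steps into the sphere are driven by independent direction choices at the respective predecessor states).

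The main obstacle is step~(2)–(3): rigorously verifying that the conditioned P\'olya walk's transition-count field is exactly the axiomatized field $P_{\mathbf n}$, and in particular that aggregating the per-state geometric laws over the whole sphere collapses to a single clean geometric law with parameters expressible through $C(n),C_0(n)$. The subtlety is that different states on sphere $n$ have different numbers of zero coordinates $d_0(\mathbf n)$, hence different individual up/down splitting ratios; one must check that the \emph{aggregate} over the sphere — weighted by the combinatorial multiplicities $\binom di\binom{n-1}{i-1}$ — still produces the stated homogeneous law for $R_n$, and that the branching/independence structure survives this aggregation (which it does because, conditionally on the arrival counts, the direction choices at distinct time-steps are genuinely independent). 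Handling the conditioning on $\tau<\infty$ cleanly — especially in dimensions $d\ge 3$ where $\mathsf P\{\tau<\infty\}<1$ — also requires care, but this is exactly what the field formulation is designed to absorb, since $P_{\mathbf 0}=1$ already encodes ``the excursion is finite and returns once.''
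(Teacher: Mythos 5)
Your outline matches the paper's strategy at the top level: prove \eqref{eq-2.11} first by exhibiting the branching/geometric structure of level-crossings, obtain \eqref{eq-2.12} from the flow identity $\overleftarrow{f}[\mathcal{N}(n)]=\overrightarrow{f}[\mathcal{N}(n+1)]$, and obtain \eqref{eq-2.10} by summation; those last two steps are exactly the paper's. But the central step of \eqref{eq-2.11} is asserted rather than proved, and the justification you offer for it does not suffice. The difficulty you yourself flag is the real one: distinct states of $\mathcal{N}(n)$ have different numbers $d_0(\mathbf{n})$ of zero coordinates, hence different up/down splitting probabilities $(d+d_0(\mathbf{n}))/(2d)$ versus $(d-d_0(\mathbf{n}))/(2d)$, so the norm process is not Markov state-by-state and the crossing counts are not governed by a single per-state parameter. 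Independence of the direction choices at distinct time steps (your stated reason) does not resolve this: it says nothing about the relative frequencies with which, at crossing times, the walk occupies states with a given $d_0$, and it is precisely those frequencies that produce the weights $C(n)$ and $C_0(n)$. Moreover, ``aggregating the per-state geometric laws'' would yield a mixture of geometrics with distinct parameters, which is not the single geometric law appearing in the definition of $R_n$; what is needed is a per-decision averaging over the sphere, i.e., replacing the norm functional by an effective birth-and-death process with a homogeneous up-probability $p_n$.

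The paper supplies exactly this missing input. In Section \ref{S5.3} it Poissonizes time (exponential holding times), models the reflected walk by $d$ independent single-server queues with negative service, computes the product-form stationary distribution \eqref{eq-5.8}, $P_N(\mathbf{n})=2^{d-d_0(\mathbf{n})}(2N+1)^{-d}$, whose ratios \eqref{eq-5.10} are independent of $N$, and from it derives the effective up-transition probability \eqref{eq-5.15}, $p_n=(C_0(n)+C(n))/(C_0(n)+2C(n))$; this is where the weighting by $2^{d-d_0(\mathbf{n})}$ (equivalently, uniform weighting of all vectors of norm $n$ in $\mathbb{Z}^d$) enters. It then applies Lemma \ref{l1}, a general birth-and-death excursion lemma proved by the interval-deletion and memorylessness argument, with $\lambda_j=C_0(j)+C(j)$ and $\mu_j=C(j)$, and \eqref{eq-2.11} follows. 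Your proposal contains no substitute for this stationary-measure (visit-frequency) computation, so the identification of the offspring law of your $T_n$ with the one-step law of $R_n$ remains a gap rather than a checkable step. A smaller slip: in your argument for \eqref{eq-2.10} the two summands are not ``conditionally independent''; one has $\overleftarrow{f}[\mathcal{N}(n)]=\overrightarrow{f}[\mathcal{N}(n+1)]$ exactly, and the pair is distributed as the consecutive values $(R_{n-1},R_n)$ of the Markov chain, which is what the expression $R_{n-1}+R_n$ in the statement refers to; the paper's one-line argument uses only this.
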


\begin{rem}
Relations \eqref{eq-2.12} and \eqref{eq-2.10} are defined for $n=0$ as well. Specifically,
$$
\mathsf{P}\left\{\overleftarrow{f}[\mathcal{N}(0)]=1\right\}
=\mathsf{P}\left\{f[\mathcal{N}(0)]=1\right\}=\mathsf{P}\left\{\overrightarrow{f}[\mathcal{N}(1)]=1\right\}=
1.
$$
\end{rem}

\begin{thm}\label{t1}
For any $\mathbf{n}\in\mathbb{Z}^2_+\setminus\{\mathbf{0}\}$ and all
$k=0,1,\ldots,$
\begin{equation}\label{eq-3.1}
\begin{aligned}
\mathsf{P}\left\{\overrightarrow{f}[\mathcal{X}(\mathbf{n})]=k\right\}
= \mathsf{P}\left\{\sum_{\mathbf{m}\in\mathcal{M}^-(\mathbf{n})}
q(\mathbf{m},\mathbf{n})=k
\right\},
\end{aligned}
\end{equation}
\begin{equation}\label{eq-3.2}
\begin{aligned}
\mathsf{P}\left\{\overleftarrow{f}[\mathcal{X}(\mathbf{n})]=k\right\}
= \mathsf{P}\left\{\sum_{\mathbf{m}\in|\mathcal{M}^+(\mathbf{n})|}
q(\mathbf{m},\mathbf{n})=k
\right\},
\end{aligned}
\end{equation}
and
\begin{equation}\label{eq-3.3}
\mathsf{P}\left\{{f}[\mathcal{X}(\mathbf{n})]=k\right\}=
\mathsf{P}\left\{Q_{\mathbf{n}}=k
\right\}.
\end{equation}
\end{thm}

\begin{rem}\label{r2}
$$\mathsf{P}\left\{\overleftarrow{f}[\mathcal{X}(\mathbf{0})]=1\right\}
=\mathsf{P}\left\{\overleftarrow{f}(\mathbf{0})=1\right\}=\mathsf{P}\left\{f(\mathbf{0})=1\right\}=1.$$
\end{rem}

\begin{thm}\label{t3}
For any $\mathbf{n}\in\mathbb{Z}^2\setminus\{\mathbf{0}\}$ and all
$k=0,1,\ldots,$
\begin{equation}\label{eq-3.4}
\begin{aligned}
\mathsf{P}\left\{\overrightarrow{f}(\mathbf{n})=k\right\}
= \mathsf{P}\left\{\sum_{\mathbf{m}\in\mathcal{M}^-(\mathbf{n})}
p(\mathbf{m},\mathbf{n})=k
\right\},
\end{aligned}
\end{equation}
\begin{equation}\label{eq-3.5}
\begin{aligned}
\mathsf{P}\left\{\overleftarrow{f}(\mathbf{n})=k\right\}
= \mathsf{P}\left\{\sum_{\mathbf{m}\in\mathcal{M}^+(\mathbf{n})}
p(\mathbf{m},\mathbf{n})=k
\right\},
\end{aligned}
\end{equation}
and
\begin{equation}\label{eq-3.6}
\begin{aligned}
\mathsf{P}\left\{{f}(\mathbf{n})=k\right\}=
\mathsf{P}\left\{P_{\mathbf{n}}=k
\right\}.
\end{aligned}
\end{equation}
\end{thm}

\begin{thm}\label{t4}
For any $\mathbf{n}\in\mathbb{Z}^2\setminus\{\mathbf{0}\}$, we have:
\begin{equation}\label{eq-3.7}
\mathsf{E}\left\{\overrightarrow{f}(\mathbf{n})\right\}=2^
{d_0(\mathbf{n})-2}
\mathsf{E}\left\{\overrightarrow{f}[\mathcal{X}(\mathbf{n})]\right\}
=\frac{2-d_0(\mathbf{n})}{4},
\end{equation}
\begin{equation}\label{eq-3.8}
\mathsf{E}\left\{\overleftarrow{f}(\mathbf{n})\right\}=2^
{d_0(\mathbf{n})-2}
\mathsf{E}\left\{\overleftarrow{f}[\mathcal{X}(\mathbf{n})]\right\}
=\frac{2+d_0(\mathbf{n})}{4},
\end{equation}
and
\begin{equation}\label{eq-3.9}
\mathsf{E}\left\{{f}(\mathbf{n})\right\}=2^
{d_0(\mathbf{n})-2}
\mathsf{E}\left\{{f}[\mathcal{X}(\mathbf{n})]\right\} =1.
\end{equation}
\end{thm}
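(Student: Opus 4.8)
The plan is to dispose of the two equalities in each line by different means: the first (the factor $2^{d_0(\mathbf n)-d}$) is pure symmetry, and the second is a conservation/reversibility computation on the excursion, which also makes the ``$=1$'' assertion in \eqref{eq-3.9} transparent. For the first: if $\sigma\in\{-1,+1\}^d$, the coordinatewise reflection $\mathbf x\mapsto(\sigma^{(1)}x^{(1)},\dots,\sigma^{(d)}x^{(d)})$ is a law-preserving bijection of the path space of the P\'olya walk that fixes $\mathbf 0$, hence fixes $\tau$ and the event $\{\tau<\infty\}$; it permutes the $2^{d-d_0(\mathbf n)}$ elements of $\mathcal X(\mathbf n)$ transitively and carries the crossing counts along. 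Since the visiting instants of the set $\mathcal X(\mathbf n)$ are the disjoint union, over $\mathbf m\in\mathcal X(\mathbf n)$, of the visiting instants of the individual states $\mathbf m$ --- and the same holds with the ``entered from a lower (resp. higher) norm neighbour'' restriction --- one has $\overrightarrow f[\mathcal X(\mathbf n)]=\sum_{\mathbf m\in\mathcal X(\mathbf n)}\overrightarrow f(\mathbf m)$, and likewise for $\overleftarrow f$ and $f$; all summands have equal conditional expectation, which is the middle member of each line.

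For the value I would work directly on the excursion $(\mathbf S_0,\dots,\mathbf S_\tau)$ conditioned on $\{\tau<\infty\}$. Splitting each visit of a state $\mathbf n\ne\mathbf 0$ by the neighbour it is entered from, Definitions \ref{d4}--\ref{d5} give $\overrightarrow f(\mathbf n)=\sum_{\mathbf m\in\mathcal M^-(\mathbf n)}(\#\ \mathbf m\to\mathbf n)$ and $\overleftarrow f(\mathbf n)=\sum_{\mathbf m\in\mathcal M^+(\mathbf n)}(\#\ \mathbf m\to\mathbf n)$, where $\#\ \mathbf m\to\mathbf n$ is the number of one-step moves from $\mathbf m$ to $\mathbf n$ in $(0,\tau)$, and $f(\mathbf n)=\overrightarrow f(\mathbf n)+\overleftarrow f(\mathbf n)$ since every neighbour of $\mathbf n$ lies in $\mathcal M^-(\mathbf n)\cup\mathcal M^+(\mathbf n)$. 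Writing $N(\mathbf m,\mathbf n)=\mathsf E\{\#\ \mathbf m\to\mathbf n\mid\tau<\infty\}$, it then suffices to prove $N(\mathbf m,\mathbf n)=\tfrac1{2d}$ for every ordered adjacent pair with $\mathbf m\ne\mathbf 0\ne\mathbf n$: by Remark \ref{r6} this yields $\mathsf E\{\overrightarrow f(\mathbf n)\mid\tau<\infty\}=|\mathcal M^-(\mathbf n)|/(2d)=(d-d_0(\mathbf n))/(2d)$ and $\mathsf E\{\overleftarrow f(\mathbf n)\mid\tau<\infty\}=|\mathcal M^+(\mathbf n)|/(2d)=(d+d_0(\mathbf n))/(2d)$, and \eqref{eq-3.9} follows by adding the two, which give $1$.

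To obtain $N(\mathbf m,\mathbf n)=\tfrac1{2d}$ I would combine two facts. First, a move $\mathbf m\to\mathbf n$ is a visit of $\mathbf m$ (at a time $<\tau$, automatically positive because $\mathbf m\ne\mathbf 0$) followed by the increment $\mathbf n-\mathbf m$; since that increment is uniform on the $2d$ unit vectors and independent of the past, summing over visits gives $N(\mathbf m,\mathbf n)=\tfrac1{2d}\,\mathsf E\{f(\mathbf m)\mid\tau<\infty\}$ (for transient $d\ge3$ the conditioning tilts the increment by the $\mathbf 0$-hitting probability $h$, handled as in the next point). Second, the excursion from $\mathbf 0$ to $\mathbf 0$ is time-reversible: all one-step probabilities are $1/(2d)$, so a path and its reversal are equiprobable, and this survives conditioning on the reversal-invariant event $\{\tau<\infty\}$ --- for $d\ge3$ because the conditioned excursion is the Doob $h$-transform of the walk, whose $h$-factors telescope to $1$ along a closed path. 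Reversal turns $\mathbf m\to\mathbf n$ moves into $\mathbf n\to\mathbf m$ moves, so $N(\mathbf m,\mathbf n)=N(\mathbf n,\mathbf m)$; with the first fact this forces $\mathsf E\{f(\mathbf m)\mid\tau<\infty\}$ to be one common constant $c$ over all $\mathbf m\ne\mathbf 0$ (connectedness of $\mathbb Z^d\setminus\{\mathbf 0\}$, plus reflection through $0$ when $d=1$), whence $N(\mathbf m,\mathbf n)=c/(2d)$. Finally I would pin $c$ at a neighbour of the origin: for $\mathbf n=\mathbf 1_i$, $\mathcal M^-(\mathbf 1_i)=\{\mathbf 0\}$ and the move $\mathbf 0\to\mathbf 1_i$ can occur only at time $0$, so $\mathsf E\{\overrightarrow f(\mathbf 1_i)\mid\tau<\infty\}=\mathsf P\{\mathbf S_1=\mathbf 1_i\mid\tau<\infty\}=1/(2d)$; since $|\mathcal M^+(\mathbf 1_i)|=2d-1$ with all its members $\ne\mathbf 0$, also $\mathsf E\{\overleftarrow f(\mathbf 1_i)\mid\tau<\infty\}=(2d-1)c/(2d)$; adding gives $c=\tfrac1{2d}+\tfrac{2d-1}{2d}c$, i.e. $c=1$, hence $N(\mathbf m,\mathbf n)=\tfrac1{2d}$. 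Then \eqref{eq-3.7}--\eqref{eq-3.8} are the directed expectations computed above and \eqref{eq-3.9} is their sum, and the first paragraph supplies the $\mathcal X(\mathbf n)$-versions.

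The delicate point, and the one I expect to absorb most of the work, is the conditioning on $\{\tau<\infty\}$. For the recurrent dimensions $d\le2$ it is vacuous and the argument is essentially the classical cycle formula $\mathsf E\{f(\mathbf n)\}=\mu(\mathbf n)/\mu(\mathbf 0)=1$ for the counting (reversing) measure $\mu$, refined by direction of entry --- the multidimensional analogue of the Sz\'ekely--Wolff level-crossing paradox. For $d\ge3$ the $h$-transform must be tracked explicitly in both facts above, and it is cleaner to route the computation through the Markov fields: by Theorems \ref{t1} and \ref{t3} the three conditional laws on the left are those of $\pi^-(\mathbf n)=\sum_{\mathbf m\in\mathcal M^-(\mathbf n)}p(\mathbf m,\mathbf n)$, $\pi^+(\mathbf n)=\sum_{\mathbf m\in\mathcal M^+(\mathbf n)}p(\mathbf m,\mathbf n)$ and $P_{\mathbf n}=\pi^-(\mathbf n)+\pi^+(\mathbf n)$ (and their $Q$-analogues), so \eqref{eq-3.7}--\eqref{eq-3.9} reduce to evaluating $\mathsf E[\pi^\pm(\mathbf n)]$ and $\mathsf E[P_{\mathbf n}]$ from the defining conditional distributions and boundary data of Section \ref{S3}: the geometric means, the convolution rule, the balance $P_{\mathbf n}=\pi^-(\mathbf n)+\pi^+(\mathbf n)$ and Remark \ref{r6} produce the same linear recursion in $\|\mathbf n\|$, anchored by $P_{\mathbf 0}=1$ and $\mathsf P\{p(\mathbf 0,\pm\mathbf 1_i)=1\}=1/(2d)$. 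Carrying that bookkeeping through --- in particular tracking how $d_0$ and $d_1$ change along the neighbour relation so the recursion closes on the asserted closed forms --- is the remaining technical step.
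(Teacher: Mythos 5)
Your symmetry paragraph and the per-visit decompositions are sound, and in the recurrent dimensions $d\le 2$, where conditioning on $\{\tau<\infty\}$ is vacuous, your route --- uniform increments at each visit, equiprobability of an excursion and its reversal, connectedness, and the self-consistency equation at $\mathbf{1}_i$ --- is a complete argument and genuinely different from the paper's, which instead models the walk by $d$ independent single-server queues, extracts the factor $2^{d_0(\mathbf{n}_2)-d_0(\mathbf{n}_1)}$ from stationary probabilities of the truncated systems, and pins the directed expectations through a combinatorial recursion anchored at $\mathsf{E}\{\overrightarrow{f}(\mathbf{1}_i)\,|\,\tau<\infty\}=1/(2d)$.

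The genuine gap is the transient case $d\ge 3$, which you defer (``the $h$-transform must be tracked explicitly''); tracking it is not bookkeeping but the point where your argument fails. Conditioned on $\{\tau<\infty\}$, the pre-$\tau$ walk is the Doob $h$-transform with $h(\mathbf{x})=\mathsf{P}_{\mathbf{x}}\{\text{hit }\mathbf{0}\}$, so each visit to $\mathbf{m}$ is followed by a step to $\mathbf{n}$ with probability $\tfrac{1}{2d}\,h(\mathbf{n})/h(\mathbf{m})$, i.e. $N(\mathbf{m},\mathbf{n})=\tfrac{1}{2d}\,\mathsf{E}\{f(\mathbf{m})\,|\,\tau<\infty\}\,h(\mathbf{n})/h(\mathbf{m})$. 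Your reversal identity $N(\mathbf{m},\mathbf{n})=N(\mathbf{n},\mathbf{m})$ is correct, but combined with this it yields constancy of $\mathsf{E}\{f(\mathbf{m})\,|\,\tau<\infty\}/h(\mathbf{m})^2$, not of $\mathsf{E}\{f(\mathbf{m})\,|\,\tau<\infty\}$: the $h$-factors telescope to $1$ only around closed paths, while the quantity attached to a fixed state carries the non-telescoping factor $h(\mathbf{m})^2$. Indeed a direct path-reversal computation gives $\mathsf{E}\{f(\mathbf{m});\,\tau<\infty\}=h(\mathbf{m})^2$ and $\mathsf{P}\{\tau<\infty\}=h(\mathbf{1}_i)$, so your scheme produces $\mathsf{E}\{f(\mathbf{m})\,|\,\tau<\infty\}=h(\mathbf{m})^2/h(\mathbf{1}_i)$, which is not independent of $\mathbf{m}$ when $d\ge 3$; correspondingly your self-consistency equation at $\mathbf{1}_i$ degenerates into the harmonicity identity $h(\mathbf{1}_i)=\tfrac{1}{2d}\bigl(1+\sum_{\mathbf{m}\in\mathcal{M}^+(\mathbf{1}_i)}h(\mathbf{m})\bigr)$ and no longer forces $c=1$. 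So the step ``this forces $\mathsf{E}\{f(\mathbf{m})\,|\,\tau<\infty\}$ to be one common constant'' is unjustified exactly where the conditioning matters, and the route as sketched cannot deliver the asserted $\mathbf{n}$-independent values there. Your fallback for $d\ge 3$ --- evaluating $\mathsf{E}[\pi^{\pm}(\mathbf{n})]$ and $\mathsf{E}[P_{\mathbf{n}}]$ from the field data of Section \ref{S3} via Theorems \ref{t1} and \ref{t3} --- is precisely the route the paper disclaims at the start of Section \ref{S8} (the field analogue of the martingale property is unknown), so in the transient dimensions your proposal leaves the main difficulty unresolved rather than reducing it to a routine computation.
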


\section{Level crossings in the one-dimensional random walk}\label{S4}

Let $Z_n$ denote the number of offspring in the $n$th generation
of the Galton-Watson branching process with $Z_0=1$ and the
offspring distribution $\mathsf{P}\{Z_1=k\}=1/2^{k+1}$. We have the following statement.

\begin{prop}\label{c2}
For the one-dimensional random walk $S_t$ the following
results are true. For all $n\neq0$ and $k=1,2,\ldots$
\begin{equation}\label{eq-5.31}
\mathsf{P}\left\{\overrightarrow{f}(n)=k\right\}=\frac{1}{2}\mathsf{P}\{Z_{|n|-1}=k\},
\end{equation}
\begin{equation}\label{eq-5.32}
\mathsf{P}\left\{\overleftarrow{f}(n)=k\right\}=\frac{1}{2}\mathsf{P}\{Z_{|n|}=k\},
\end{equation}
and
\begin{equation}\label{eq-5.33}
\mathsf{P}\left\{f(n)=k\right\}=\frac{1}{2}\mathsf{P}\{Z_{|n|-1}+Z_{|n|}=k\},
\end{equation}
and the probabilities
$\mathsf{P}\left\{\overrightarrow{f}(n)=0\right\}$,
$\mathsf{P}\left\{\overleftarrow{f}(n)=0\right\}$ and
$\mathsf{P}\left\{f(n)=0\right\}$ are determined from the
normalization conditions:
\begin{eqnarray}
% \nonumber % Remove numbering (before each equation)
  \mathsf{P}\left\{\overrightarrow{f}(n)=0\right\} &=& \frac{1}{2}+\frac{1}{2}\mathsf{P}\{Z_{|n|-1}=0\},\label{eq-5.34} \\
  \mathsf{P}\left\{\overleftarrow{f}(n)=0\right\} &=& \frac{1}{2}+\frac{1}{2}\mathsf{P}\{Z_{|n|}=0\},\label{eq-5.35} \\
  \mathsf{P}\left\{f(n)=0\right\} &=& \frac{1}{2}+\frac{1}{2}\mathsf{P}\{Z_{|n|-1}+Z_{|n|}=0\}.\label{eq-5.37}
\end{eqnarray}
\end{prop}

\begin{proof}
The proof of this theorem is based on the analysis of the random walk $S_t$ from the position of queueing theory (similarly to that was provided in \cite{Abramov_AoP}).
Assume that in each of its states the random walk $S_t$
stays an exponentially distributed time with parameter 1, prior
moving to the next state. Then, the parameter $t$ in $S_t$ means the
$t$th event of the associated Poisson process with rate 1.
The meaning of the random time instant $\tau$ is then the $\tau$th
event of the same Poisson process (with random order
number). This unusual replacement of deterministic $t$ by random is
made for reduction to queueing processes and using the well-known
properties of Poisson process and exponential distribution.

Since the random walk is symmetric, it is clear that the number of
level-crossings of any negative level, say $(-5)$, has the same
distribution as the number of level-crossings of the corresponding
positive level 5. On the other hand, if
$S_{1}=-1$, then the following excursion of the random walk before
reaching the initial point 0 will be in the negative area only, and
otherwise, if $S_{1}=+1$, in the only positive area. This means that the
random walk can be studied in the positive area, starting from
$S_1=1$. That is, for positive $n$, and for all $k=0,1,\ldots$,
\begin{eqnarray}
\mathsf{P}\left\{\overrightarrow{f}(n)=k\right\}&=&\frac{1}{2}
\mathsf{P}\left\{\overrightarrow{f}(n)=k|S_1=1\right\},\label{eq-5.21}\\
\mathsf{P}\left\{\overleftarrow{f}(n)=k\right\}&=&\frac{1}{2}
\mathsf{P}\left\{\overleftarrow{f}(n)=k|S_1=1\right\},\label{eq-5.22}
\end{eqnarray}
and
\begin{equation}\label{eq-5.23}
\mathsf{P}\{f(n)=k\}=\frac{1}{2}\mathsf{P}\{f(n)=k|S_1=1\}.
\end{equation}
Apparently,
$\mathsf{P}\Big\{\overrightarrow{f}(1)=k|S_1=1\Big\}=\delta_{k,1}$,
where $\delta_{k,j}$ is Kronecker's delta.
\smallskip

Consider now the $M/M/1/n$ queueing system (with $n-1$ waiting
places), in which interarrival and service times are exponentially
distributed with same parameter. The value of the parameter does not
matter. However, to make the paths of queueing process and random
walk equivalent, one can reckon that the mean interarrival and
service times are equal to 2. In that case, the times between the
consecutive events (such as arrivals and service completions) are exponentially
distributed with parameter 1.

\begin{lem}\label{l1}
Let $L_n$ denote the number of losses during a busy period. Then,
for all $k=0,1,\ldots$,
\begin{equation}\label{eq-5.27}
\mathsf{P}\{L_n=k\}=\mathsf{P}\left\{\overrightarrow{f}(n+1)=k|S_1=1\right\}
= \mathsf{P}\{Z_{n}=k\}.
\end{equation}
\end{lem}
\begin{proof}
The number of losses $L_n$ in the $M/M/1/n$ queueing system can be
specified as follows. Let $z_j$ denotes the number of times during
the busy period when an arriving customer finds $j$ customers in the
system ($0\leq j\leq n-1$), and let $a_1^{(j)}$,
$a_2^{(j)}$,\ldots,$a_{z_j}^{(j)}$ denote the moments of arrivals
when an arriving customer finds $j$ customers in the system,
$b_1^{(j)}$, $b_2^{(j)}$,\ldots,$b_{z_j}^{(j)}$ denote the moments
of service completions (departures) of the customers after which
there are remain only $j$ customers in the system. Apparently,
$z_0=1$, $a_1^{(0)}$ is the moment when the busy period starts, and
$b_1^{(0)}$ is the moment when the busy period ends. The time
intervals
\begin{equation}\label{eq-5.24}
\big[a_1^{(j)}, b_1^{(j)}\big), \big[a_2^{(j)}, b_2^{(j)}\big),
\ldots, \big[a_{z_j}^{(j)}, b_{z_j}^{(j)}\big)
\end{equation}
are contained in the intervals
\begin{equation}\label{eq-5.25}
\big[a_1^{(j-1)}, b_1^{(j-1)}\big), \big[a_2^{(j-1)},
b_2^{(j-1)}\big), \ldots, \big[a_{z_{j-1}}^{(j-1)},
b_{z_{j-1}}^{(j-1)}\big)
\end{equation}
$(j\geq 1)$. Let us delete the intervals of \eqref{eq-5.24} from
those of \eqref{eq-5.25} and merge the ends. Then, according to the
property of the lack of memory, the number of inserted points in
each of the intervals of \eqref{eq-5.25} coincides in distribution
with the number of arrivals per service time and has geometric
distribution with parameter $1/2$. This enables us to
conclude that $\{z_j\}$ have the structure of the Galton-Watson
branching process, where $z_j$ is the number of offspring in the
$j$th generation with $z_0=1$ and
$\mathsf{P}\{z_1=k\}=\left(\frac{1}{2}\right)^{k+1}$,
$k=0,1,\ldots$. That is, $z_j=Z_j$, ($j=0,1,\ldots,n-1$). Let us
consider the specific intervals
\begin{equation}\label{eq-5.29}
\big[a_1^{(n-1)}, b_1^{(n-1)}\big), \big[a_2^{(n-1)},
b_2^{(n-1)}\big), \ldots, \big[a_{z_{n-1}}^{(n-1)},
b_{z_{n-1}}^{(n-1)}\big).
\end{equation}
The arrival instants $a_1^{(n)}$, $a_2^{(n)}$,\ldots,
$a_{L_n}^{(n)}$ are the instants of losses from the system.
Apparently, $L_n$ must coincide in distribution with $Z_n$, the
number of offspring in the $n$th generation of the branching
process.

Consider now the family of $M/M/1/n$ queueing systems,
$n=1,2,\ldots$. On the basis of the above construction, $L_1$ must
coincide in distribution with $Z_1$, $L_2$ with $Z_2$ and so on, and
the family of queueing processes associated with the $M/M/1/n$
queueing systems together with the $M/M/1$ queueing process can be
considered on the common probability space. The paths of the random
walk are given on the same probability space, and relation
\eqref{eq-5.27} is clear due to the coincidence of the paths of the
$M/M/1$ queueing process and random walk.
\end{proof}

From Relation \eqref{eq-5.27} of Lemma \ref{l1} and \eqref{eq-5.21} we obtain \eqref{eq-5.31}. In order to obtain \eqref{eq-5.32} and
\eqref{eq-5.33}, we are to prove the following lemma.

\begin{lem}\label{l2}
 For $n>0$ the following obvious relations are true with probability
 $1$:
\begin{eqnarray}
\overleftarrow{f}(n)&=&\overrightarrow{f}(n+1),\label{eq-5.36}\\
f(n)&=&\overrightarrow{f}(n)+\overrightarrow{f}(n+1).\label{eq-5.40}
\end{eqnarray}
\end{lem}

\begin{proof}
Indeed, the total number of (undirected) level-crossings of the
level $n$ includes the number of level-crossings of level $n$ from
the below plus those from the above, i.e.
\begin{equation}\label{eq-5.41}
f(n)=\overrightarrow{f}(n)+\overleftarrow{f}(n).
\end{equation}
Apparently, $\overleftarrow{f}(n)$ coincides with
$\overrightarrow{f}(n+1)$, since the number of down-crossings from
$n+1$ to $n$ is equal to the number of up-crossings from $n$ to
$n+1$, and \eqref{eq-5.36} follows. Its substitution for
\eqref{eq-5.41} yields \eqref{eq-5.40}.
\end{proof}

\medskip
\noindent
\textit{Continuation of the proof of Proposition \ref{c2}.}
Now, Relation \eqref{eq-5.32} follows from \eqref{eq-5.36} and \eqref{eq-5.22}, and Relation \eqref{eq-5.33} follows from \eqref{eq-5.40} and \eqref{eq-5.23}. From the normalization condition, for $\mathsf{P}\left\{\overrightarrow{f}(n)=0\right\}$ we obtain:
$$
\begin{aligned}
\mathsf{P}\left\{\overrightarrow{f}(n)=0\right\}&=1-\sum_{k=1}^{\infty}\mathsf{P}\left\{\overrightarrow{f}(n)=k\right\}\\
&=1-\frac{1}{2}\sum_{k=1}^{\infty}\mathsf{P}\{Z_{|n|-1}=k\}\\
&=1-\frac{1}{2}(1-\mathsf{P}\{Z_{|n|-1}=0\})\\
&=\frac{1}{2}+\frac{1}{2}\mathsf{P}\{Z_{|n|-1}=0\},
\end{aligned}
$$
and \eqref{eq-5.34} is proved. The derivation of \eqref{eq-5.35} and \eqref{eq-5.37} is similar. Proposition \ref{c2} is proved.
\end{proof}

\section{Proof of Theorem \ref{t2}}\label{S5}
\subsection{Prelude}
In the proof of this and the following theorems, it is assumed that in each of its states the random walk stays an exponentially distributed time with parameter 1, prior moving to
the next state. This change of time is similar to that made in Section \ref{S4} and was used in \cite{Abramov_AoP} in more general situation.
In the case of the
present study, the change of states in the two-dimensional random walk is
associated with four independent and identical Poisson processes
with rate $1/4$ each. Specifically, marking them
$$
\underbrace{a_{1,1}}_{\mathbf{1}_1}, \underbrace{a_{1,2}}_{-\mathbf{1}_1}, \underbrace{a_{2,1}}_{\mathbf{1}_2}, \underbrace{a_{2,2}}_{-\mathbf{1}_2}
$$
indicates that the Poisson process $a_{i,1}$ is associated with the
direction $\mathbf{1}_i$, and the Poisson process $a_{i,2}$ is
associated with the direction $(-\mathbf{1}_i)$, $i=1,2$.
Then, the direction of the random walk is associated with the
minimum of four exponentially distributed ``inter-jump" times.

This construction enables us to model the two-dimensional random walk \textit{in the main quarter plane} as two independent and identical queueing systems.
Study of the random walk in the main quarter plane helps us to model the required characteristics $\overrightarrow{f}[\mathcal{N}(n)]$, $\overleftarrow{f}[\mathcal{N}(n)]$ and $f[\mathcal{N}(n)]$, and reduction to continuous time processes essentially simplify
the analysis, which is based on well-known and elementary results of
continuous Markov chains. Then, the parameter $t$ in $\mathbf{S}_t$
means the $t$th event of the associated Poisson process with rate 1.
The meaning of the random time instant $\tau$ is then the $\tau$th
event of the same Poisson process. In the sequel, any phrase like \textit{time
moment $x$} means that it is spoken about the $x$th event of the
Poisson process.

\subsection{Description of queueing systems and the final measures}\label{S5.2} This section is a simplified version of the corresponding place in \cite{Abramov_AoP}, where more general queueing systems have been considered. The description of the queueing model for the random walk in $\mathbb{Z}^d$ is provided in the Appendix and reprodiced here in the case of random walk in $\mathbb{Z}^2$ for the convenience of reading. Each of the two identical queueing systems is described as follows. Arrivals to each queueing system are Poisson with rate 1/4, and service times are exponentially distributed with parameter 1/4. The aforementioned value of arrival rate 1/4 is not important, and later in  the paper this value  will be denoted by $\lambda$, as acceptable. Since the mean service and mean interarrival time are equal, the notation $\lambda$ will be used for the reciprocal of the mean service time as well. If a system
becomes free, it is switched for a special service with the same mean. This service is
\textit{negative}, and it results in a new customer in the queue. If during a negative service a
new arrival occurs, the negative service remains unfinished and not resumed. The negative service models the reflection at zero and in fact implies the state dependent
arrival rate, which becomes equal to $2\times(1/4)=1/2$ at the moment when the system is
empty. In terms of random walks, it is the situation, when an original (not reflected) one-dimensional random
walk reaches zero at some time moment $s$, and at the next time moment $s + 1$ it must
take one of the values $\pm1$, that corresponds to value $+1$ for an one-dimensional random
walk reflected at zero.

In \cite{Abramov_AoP}, p. 1908, the transition probability from the set of states $\mathcal{N}(n)$ to the set of states $\mathcal{N}(n+1)$ has been derived. In the case of the two-dimensional random walk this probability $p_n(2)$ (the notation is from \cite{Abramov_AoP}) is
$$
p_n(2)=\frac{2n+1}{4n},
$$
and the transition probability from the set of states $\mathcal{N}(n)$ to the set of states $\mathcal{N}(n-1)$ is
$$
q_n(2)=1-p_n(2)=\frac{2n-1}{4n}.
$$
The transition probabilities $p_n(2)$ and $q_n(2)$ coincide with the relative rates of the birth-and-death process $BD(2,2)$
introduced in \cite{Abramov_AoP}:
$$
p_n(2)=\frac{\lambda_n(2,2)}{\lambda_n(2,2)+\mu_n(2,2)}
$$
The details of the derivation of the formula for $p_n(d)$ is provided in the Appendix, and the formula for $p_n(2)$ is a particular case of that formula.

\subsection{Basic lemma and its proof}
The rest of the proof of the theorem is based on the lemma, which is an extension of Lemma \ref{l1}. The proof of this lemma is fully similar to the proof of aforemention Lemma \ref{l1} and reproduced here without specific details.

\begin{lem}\label{l3}
Let $\phi_t$ be a recurrent birth-and-death process with the parameters of birth
and death $\lambda_n$ and $\mu_n$, respectively. Assume that
$\phi_0=1$, and let $\upsilon$ be the time moment of extinction of
the birth-and-death process. Let $g(n)$, $n\geq1$, denote the number of
times during the random interval $[0,\upsilon)$ when immediately
before the time of a birth there become $n$ individuals in the
population, $g(0)=1$. Then,
\begin{equation}\label{eq-5.3}
\mathsf{P}\{g(n)=k\}=\mathsf{P}\left\{V_n=k
\right\},
\end{equation}
where $V_n$ is the positive integer-valued Markov chain satisfying
the following properties:
\begin{eqnarray}
V_0&=&1,\label{eq-5.4}\\
\mathsf{P}\{V_{j+1}=k|V_j=m\}&=&\binom{m+k-1}{k}\left(\frac{\lambda_j}{\lambda_j+\mu_j}\right)^k\left(\frac{\mu_j}{\lambda_j+\mu_j}\right)^m,\label{eq-5.5}\\
j=0,1,\ldots.&&\nonumber
\end{eqnarray}

\end{lem}

\begin{proof}
Let $a_1(j)$, $a_2(j)$,\ldots, $a_{g(j)}(j)$ denote the birth times
immediately before there become $j$ individuals in the population,
and let $b_1(j)$, $b_2(j)$,\ldots, $b_{g(j)}(j)$ denote the death
times, after which there are remain only $j$ individuals in the
population. Apparently, based on the convention $g(0)=1$, the times
$a_1(0)$ and $b_1(0)$ are unique, $a_1(0)$ is the moment of the
first birth and $b_1(0)$ is the extinction time. For $1\leq j$, the
time intervals
\begin{equation}\label{eq-5.1}
\big[a_1(j), b_1(j)\big), \big[a_2(j), b_2(j)\big), \ldots,
\big[a_{g(j)}(j), b_{g(j)}(j)\big)
\end{equation}
are contained in the intervals
\begin{equation}\label{eq-5.2}
\begin{aligned}
&\big[a_1(j-1), b_1(j-1)\big), \big[a_2(j-1), b_2(j-1)\big),
\ldots,\\
&\big[a_{g(j-1)}(j-1), b_{g(j-1)}(j-1)\big).
\end{aligned}
\end{equation}
Let us delete the intervals of \eqref{eq-5.1} from those of
\eqref{eq-5.2} and merge the ends. Then, according to the property
of the lack of memory of exponential distribution, the number of
merged points in each of the intervals of \eqref{eq-5.2} coincides
in distribution with the number of births per a death time of an
individual in the population (the birth and death rates are reckoned
to be unchanged and equal to $\lambda_{j}$ and $\mu_{j}$,
respectively) and has geometric distribution with parameter
$\lambda_{j}/(\lambda_{j}+\mu_{j})$, and given that the number of intervals in $m$, the total number of merged points in these intervals has negative binomial distribution, that is,
$$
\mathsf{P}\{g(j+1)=k|g(j)=m\}=\binom{m+k-1}{k}\left(\frac{\lambda_j}{\lambda_j+\mu_j}\right)^k\left(\frac{\mu_j}{\lambda_j+\mu_j}\right)^m.
$$
This enables us to conclude
that $\{g(j)\}$ has a structure similar to that of the branching process. Specifically,
the distribution of the number of offspring in the $j$th generation
depends on the order number of generation, $j$, and is described by
the Markov chain $V_j$.

Hence, the distribution of $g(j)$ coincides with the
distribution of $V_j$, and \eqref{eq-5.3}, \eqref{eq-5.4} and \eqref{eq-5.5} are true.
\end{proof}

\subsection{Proof of \eqref{eq-2.11}, \eqref{eq-2.12} and \eqref{eq-2.10}} Applying Lemma \ref{l1} with $\lambda_n=p_n(2)$ and $\mu_n=q_n(2)$ we arrive at \eqref{eq-2.11} with the Markov chain $R_n$ defined in Section \ref{S3.3}.
To prove \eqref{eq-2.12}, note that $\overleftarrow{f}[\mathcal{N}(n)]=\overrightarrow{f}[\mathcal{N}(n+1)]$,
since the total number of crossings from the level $n$ to $n+1$ must
coincide with the total number of crossings from the level $n+1$ to
$n$. Next,
$f[\mathcal{N}(n)]=\overrightarrow{f}[\mathcal{N}(n)]+\overleftarrow{f}[\mathcal{N}(n)]$
and \eqref{eq-2.10} is true.

\section{Proof of Theorem \ref{t1}}\label{S6}

\subsection{Prelude}
The proof is based on study of the random walk in $\mathbb{Z}^2_+$, which is defined as follows:
\begin{eqnarray}
\mathbf{S}_0&=&\mathbf{0},\label{eq-6.4}\\
\mathbf{S}_{t}&=&\mathbf{S}_{t-1}+\mathbf{e}_t(\mathbb{Z}_+^2), \quad
t\geq1,\label{eq-6.5}
\end{eqnarray}
where
\begin{equation}\label{eq-6.6}
{\mathbf{e}}_t(\mathbb{Z}_+^2)=\begin{cases} \mathbf{e}_t, &\text{if}\
 {S}_{t-1}^{(i)}+e_t^{(i)}\geq0 \ \text{for all} \ i=1,2;\\
-\mathbf{e}_t, &\text{if}\
{S}_{t-1}^{(i)}+e^{(i)}_t=-1 \ \text{for a certain} \ i=1,2,
\end{cases}
\end{equation}
and the vector $\mathbf{e}_t=\Big(e_t^{(1)},
e_t^{(2)}\Big)=\mathbf{e}_t(\mathbb{Z}^2)$.

The random walk defined by \eqref{eq-6.4} -- \eqref{eq-6.6} is the reflected version of the random walk defined by \eqref{eq-1.1} and \eqref{eq-1.2}.

Hence, the  distributions of
$\overrightarrow{f}(\mathbf{n})$, $\overleftarrow{f}(\mathbf{n})$
and $f(\mathbf{n})$ for the random walk in $\mathbb{Z}^2_+$  are
equivalent to the  distributions of
$\overrightarrow{f}[\mathcal{X}(\mathbf{n})]$,
$\overleftarrow{f}[\mathcal{X}(\mathbf{n})]$ and
$f[\mathcal{X}(\mathbf{n})]$, respectively, for the random walk in
$\mathbb{Z}^2$.

According to the conventional notation, $\tau$ is the time of the first return to the origin of the random walk in $\mathbb{Z}^2$. Apparently, if $\tau^\prime$ is the time of the first return to the origin of the random walk in $\mathbb{Z}_+^2$, the random times $\tau$ and $\tau^\prime$ coincide in distribution, and being considered on the same probability space can be considered as identical. Hence, the only notation $\tau$ will be used for either of random walks.

\subsection{Proof of \eqref{eq-3.1}}\label{S6.2}
The random walk in $\mathbb{Z}_+^2$ is modeled by two independent
queueing systems, the structure of which is described in Section
\ref{S5.2}.

At the initial time moment $t=0$ both queueing systems are assumed to
be empty, and, after $t=0$, the first arrival to one of the queueing
systems occurs. By busy period we mean the time interval $[1,
\tau]$. (Recall that the time $t$ is discrete, $t=1$ is the moment
of the first arrival to one of the two systems, which are
initially empty, and $\tau$ means the $\tau$'s event of the Poisson
process, at which the queueing systems become empty once again
at the first time since $t=0$.)

The proof of \eqref{eq-3.1} is based on an extension of the level-crossing
technique given in the proof of Lemma \ref{l3} for the
two-dimensional random walk.

Consider first the set of vectors $\mathcal{N}^+(n)$ (the set of all
vectors in $\mathbb{Z}^2_+$ with norm $n$). The arguments for this
set of vectors is similar to that provided before in the proof of
Lemma \ref{l3}. Recall them in terms of the relevant objects and
notation. Let $z_n$ denote the number of cases when at the
moment of arrival of a customer, the total (cumulative) number of
customers in the queueing systems becomes $n$. Let $a_1(n)$,
$a_2(n)$,\ldots, $a_{z_n}(n)$ be the moments of
these arrivals, and let $b_1(n)$, $b_2(n)$,\ldots,
$b_{z_n}(n)$ be the moments of service completion, when
there totally remain $n-1$ customers.

Apparently, the time intervals
\begin{equation}\label{eq-6.30}
\big[a_1(n), b_1(n)\big), \big[a_2(n),
b_2(n)\big),\ldots, \big[a_{z_n}(n),
b_{z_n}(n)\big)
\end{equation}
are contained in the time intervals
\begin{equation}\label{eq-6.31}
\begin{aligned}
&\big[a_1(n-1), b_1(n-1)\big), \big[a_2(n-1),
b_2(n-1)\big)
,\ldots,\\
&\big[a_{z_{n-1}}(n-1),
b_{z_{n-1}}(n-1)\big).
\end{aligned}
\end{equation}
Deleting the intervals of \eqref{eq-6.30} from those of
\eqref{eq-6.31} and merging the ends yields the set of points. The
residual times in the points intervals merged have an exponential
distribution, the parameter of which typically depends on the
allocation structure of $n-1$ customers in two queueing systems at
the moment of the service completion. For instance, if one of the servers
is empty, then the (residual) service rate is $\lambda$ and
total (residual) arrival rate is $3\lambda$. (The last
includes the rate $\lambda$ of a negative service, so $2\lambda+\lambda=3\lambda$.)

Let $\mathbf{n}$ be a point that has norm $\|\mathbf{n}\|=n$
 and
thus belongs to $\mathcal{N}^+(n)$.
 Denote by $z_{\mathbf{n}}$ the number of
cases when at the moment of a customer's arrival to one of the
queueing systems, there become $n^{(1)}$, $n^{(2)}$ customers in the corresponding queueing systems, the order
numbers of which is indicated by the upper index, and $n^{(1)}+n^{(2)}=n$. Let
$a_1(\mathbf{n})$, $a_2(\mathbf{n})$,\ldots,
$a_{z_{\mathbf{n}}}(\mathbf{n})$ be the moments of these
arrivals, and let $b_1(\mathbf{n})$,
$b_2(\mathbf{n})$,\ldots,
$b_{z_{\mathbf{n}}}(\mathbf{n})$ be the moments of
service completions following the first time after the corresponding
times $a_i(\mathbf{n})$, $i=1,2,\ldots,z_{\mathbf{n}}$,
when there remain $n-1$ customers in two queueing
systems in total.

So, we have the time intervals
\begin{equation}\label{eq-6.7}
\big[a_1(\mathbf{n}), b_1(\mathbf{n})\big),
\big[a_2(\mathbf{n}), b_2(\mathbf{n})\big),\ldots,
\big[a_{z_{\mathbf{n}}}(\mathbf{n}),
b_{z_{\mathbf{n}}}(\mathbf{n})\big).
\end{equation}
Note, that if $\mathbf{n}=\mathbf{1}_i$ (with $n=1$) the time interval $
\big[a_1(\mathbf{1}_i), b_1(\mathbf{1}_i)\big)$, if
it is, coincides with the busy period $[1,\tau]$, and
\begin{equation}\label{eq-6.11}
\mathsf{P}\left\{z_{\mathbf{1}_i}=1\right\}=\frac{1}{2}, \quad
\mathsf{P}\left\{z_{\mathbf{1}_i}=0\right\}=\frac{1}{2},
\quad i=1,2,
\end{equation}
together with the condition
\begin{equation}\label{eq-6.15}
z_{\mathbf{1}_1}+z_{\mathbf{1}_2}=1.
\end{equation}
Note, that any two differences
$a_2(\mathbf{n})-a_1(\mathbf{n})$ and
$a_{i+1}(\mathbf{n})-a_i(\mathbf{n})$ ($i\geq2$) (if
exist) are identically distributed, so $a_i(\mathbf{n})$ have
a structure of regeneration points.

Now, let $\mathbf{m}\in\mathcal{M}^-(\mathbf{n})$, and let
$z_{\mathbf{m}}$ denote the number of arrivals, at the time of
which there become $m^{(1)}$, $m^{(2)}$ numbers of
customers in the corresponding queueing systems, the order numbers
of which are indicated by the upper index, $m^{(1)}+m^{(2)}=n-1$. Then, for each of the
vectors $\mathbf{m}$ from the set $\mathcal{M}^-(\mathbf{n})$ one
can define the sequences $a_1(\mathbf{m})$,
$a_2(\mathbf{m})$,\ldots,
$a_{z_{\mathbf{m}}}(\mathbf{m})$ and
$b_1(\mathbf{m})$, $b_2(\mathbf{m})$,\ldots,
$b_{z_{\mathbf{m}}}(\mathbf{m})$ and the intervals
\begin{equation}\label{eq-6.24}
\big[a_1(\mathbf{m}), b_1(\mathbf{m})\big),
\big[a_2(\mathbf{m}), b_2(\mathbf{m})\big),\ldots,
\big[a_{z_{\mathbf{m}}}(\mathbf{m}),
b_{z_{\mathbf{m}}}(\mathbf{m})\big)
\end{equation}
by the similar way as before.

Apparently, there are intervals
defined by \eqref{eq-6.7} that are contained in the set of intervals
\eqref{eq-6.24}, and let their number be
$z_{\mathbf{m},\mathbf{n}}$.
Let us delete the intervals of \eqref{eq-6.7} from those of
\eqref{eq-6.24} and merge the ends. Note, that with
$n=\|\mathbf{n}\|$ the set of intervals defined by \eqref{eq-6.24}
is a subset of the system of intervals given by \eqref{eq-6.31}, and
the set of intervals given by \eqref{eq-6.7} is a subset of
intervals given by \eqref{eq-6.30}. Let us remove all intervals of
\eqref{eq-6.31} that are not \eqref{eq-6.24} and all intervals of
\eqref{eq-6.30} that are not \eqref{eq-6.7}. Then, the
aforementioned merged points of the intervals of \eqref{eq-6.7}
imbedded into the intervals of \eqref{eq-6.24} have a structure of
regeneration points (the differences between merged points have the
same distribution) and satisfy the following properties. First, the
residual times to the next arrival or service completion both
distributed exponentially.

The mean time to the next arrival that
occurs from state $\mathbf{m}\in\mathcal{M}^-(\mathbf{n})$ to the
state $\mathbf{n}$ depends on the state $\mathbf{m}$. More
specifically, if $\mathbf{m}$ is on boundary and state $\mathbf{n}$ has rank 2, i.e. $r(\mathbf{n})=2$, then, because of reflection at zero, the mean time to the next arrival is $1/(2\lambda)$. In all other situations it is $1/\lambda$. (See Figure 3.)

 Hence, the number of merged
points within an arbitrary interval $\big[a_j(\mathbf{m}),
b_j(\mathbf{m})\big)$, due to the property of the lack of
memory of exponential distribution, has a geometric distribution,
which is the same for any $j$. The parameter of this geometric
distribution depends on $\mathbf{m}$. If $\mathbf{m}\in\mathcal{M}^-(\mathbf{n})$, $d_0(\mathbf{m})=1$ (that is, the vector $\mathbf{m}$ belongs to the boundary) and $\|\mathbf{n}\|\geq2$, then the parameter of geometric distribution is $r(\mathbf{n})$. Otherwise, if $d_0(\mathbf{m})=0$ (the vector $\mathbf{m}$ is in interior) and $\|\mathbf{n}\|\geq2$, it is 1/4.

The next step is to distinguish different elements of $\mathcal{M}^-(\mathbf{n})$. The number of vectors in $\mathcal{M}^-(\mathbf{n})$ can be one or two. If $\mathbf{n}$ is on boundary, than $\mathcal{M}^-(\mathbf{n})$ contains only a single vector. Otherwise, there two vectors in $\mathcal{M}^-(\mathbf{n})$. To be formal, let us denote the elements of $\mathcal{M}^-(\mathbf{n})$ by $\mathbf{m}_1$,\ldots, $\mathbf{m}_{2-d_0(\mathbf{n})}$. Then,
$$
z_{\mathbf{n}}=z_{\mathbf{m}_1, \mathbf{n}}+\ldots+z_{\mathbf{m}_{2-d_0(\mathbf{n})}, \mathbf{n}},
$$
where $z_{\mathbf{m}_1, \mathbf{n}}$,\ldots,
$z_{\mathbf{m}_{2-d_0(\mathbf{n})}, \mathbf{n}}$ are
independent random variables. The construction of the proof implies the required properties of $q(\mathbf{m},\mathbf{n})$. Indeed, if $d_0(\mathbf{m})=1$ and $\|\mathbf{n}\|\geq2$, then
$$
\mathsf{P}\{z_{\mathbf{m},\mathbf{n}}=n~|~z_{\mathbf{m}}=k\}=\binom{k+n-1}{n}\left(\frac{r(\mathbf{n})}{4}\right)^n\left(\frac{4-r(\mathbf{n})}{4}\right)^k.
$$
Otherwise, if $d_0(\mathbf{m})=0$ and $\|\mathbf{n}\|\geq2$, then
$$
\mathsf{P}\{z_{\mathbf{m},\mathbf{n}}=n~|~z_{\mathbf{m}}=k\}=\binom{k+n-1}{n}\left(\frac{1}{4}\right)^n\left(\frac{3}{4}\right)^k.
$$

\subsection{Proof of \eqref{eq-3.2} and \eqref{eq-3.3}}
The proof of \eqref{eq-3.2} is technically similar to that of
\eqref{eq-3.1}. We define the system of intervals \eqref{eq-6.24}
where $\mathbf{m}$ belongs now to $|\mathcal{M}^+(\mathbf{n})|$.
Then, the intervals of \eqref{eq-6.7} are contained in those of
\eqref{eq-6.24}. Deleting \eqref{eq-6.7} from \eqref{eq-6.24} and
merging the ends, we obtain the set of points, the number of which
is denoted by $z_{\mathbf{m},\mathbf{n}}$. Similarly, we define $z_{\mathbf{n}}$ satisfying the relation
$$
z_{\mathbf{n}}=z_{\mathbf{m}_1, \mathbf{n}}+z_{\mathbf{m}_2, \mathbf{n}},
$$
where $\mathbf{m}_1$ and $\mathbf{m}_2$ are the vectors of $|\mathcal{M}^+(\mathbf{n})|$. (The total number of vectors in $|\mathcal{M}^+(\mathbf{n})|$ is two.)

By similar
arguments to those used in the proof in Section \ref{S6.2}, we
arrive at the relation
$$
\mathsf{P}\{z_{\mathbf{m},\mathbf{n}}=n~|~z_{\mathbf{m}}=k\}=\binom{k+n-1}{n}\left(\frac{1}{4}\right)^n\left(\frac{3}{4}\right)^k.
$$
So, \eqref{eq-3.2} follows.

 In turn, the proof of
\eqref{eq-3.3} follows from the relation
$$
f[\mathcal{X}(\mathbf{n})]=\overrightarrow{f}[\mathcal{X}(\mathbf{n})]
+\overleftarrow{f}[\mathcal{X}(\mathbf{n})].
$$
The proof of Theorem \ref{t1} is completed.

\section{Proof of Theorem \ref{t3}}\label{S7}

The proof of Theorem \ref{t3} is similar to that of Theorem
\ref{t1}. In this case, however, the random walk is modelled by the two independent and identical $M/M/1$ queueing systems, in which the arrival and service rates are equal, and \textit{no negative service is considered}. This is because \textit{no reflection mechanism is presented}, and the two queueing systems represent the only part of the basic random walk in the main quarter plane.
Then, the behavior in boundary states is reckoned to be similar to those in interior states. For instance, if the random walk moves from the state $\mathbf{m}=(0, 5)$ to the state
$\mathbf{n}=(-1, 5)$, it still can be modelled by the two queueing systems, in the first of which the component $(-1)$ of the random walk is transformed to $+1$ in the queue, but the movement from ``empty queue" to ``queue with a customer" occurs without ``reflection mechanism". Specifically, because of the symmetry, we use the fact that the number of crossings a state containing a negative component or negative components has the same distribution as the number of crossings the corresponding state from the main quarter plane. Hence, the proof of Theorem \ref{t1} can be adapted to this case in very similar way. Specifically, the arguments of the proof of \eqref{eq-3.4} are fully similar to those of \eqref{eq-3.1} in Theorem \ref{t1}. However, the proof of \eqref{eq-3.5} as well as \eqref{eq-3.6} need to take into account the boundary effect that is explained below.

The difference between the proofs of \eqref{eq-3.2} and \eqref{eq-3.5} is that, instead of the vectors of the set $|\mathcal{M}^+(\mathbf{n})|$ used in the proof of \eqref{eq-3.2} in Theorem \ref{t1}, the proof of \eqref{eq-3.5} should use the vectors of the set $\mathcal{M}^+(\mathbf{n})$. If $\mathbf{n}$ is from interior, then the sets $\mathcal{M}^+(\mathbf{n})$ and $|\mathcal{M}^+(\mathbf{n})|$ coincide, and the proof of \eqref{eq-3.5} in this case is the same as that of \eqref{eq-3.2}.
If, however, $\mathbf{n}$ belongs to the boundary, then $\mathcal{M}^+(\mathbf{n})$ contains a vector with negative component. For instance, if $\mathbf{n}=(0, 5)$, then $\mathcal{M}^+(\mathbf{n})=\{(1, 5), (0,6), (-1, 5)\}$. The value $(-1)$ in the vector $(-1, 5)$ is not associated with negative queue. The vector $(-1, 5)$ is considered as a tagged state that is formally added to the set of states. Here is a more detailed explanation.

Let $z_{\mathbf{n}}$ denote the number of
cases when at the moment of a customer's service completion in one of the
queueing systems, there become $n^{(1)}$, $n^{(2)}$ customers in the corresponding queueing systems, the order
numbers of which is indicated by the upper index, and $n^{(1)}+n^{(2)}=n$. Considering the interesting us case, set $n^{(1)}=0$ and $n^{(2)}=n$.

With $\mathbf{n}=(0, n)$ let
$a_1(\mathbf{n})$, $a_2(\mathbf{n})$,\ldots,
$a_{z_{\mathbf{n}}}(\mathbf{n})$ be the moments of these
service completions, and let $b_1(\mathbf{n})$,
$b_2(\mathbf{n})$,\ldots,
$b_{z_{\mathbf{n}}}(\mathbf{n})$ be the moments of
arrivals following the first time after the corresponding
times $a_i(\mathbf{n})$, $i=1,2,\ldots,z_{\mathbf{n}}$,
when there are $n+1$ customers in two queueing
systems in total.

So, we have the time intervals
\begin{equation}\label{eq-7.1}
\big[a_1(\mathbf{n}), b_1(\mathbf{n})\big),
\big[a_2(\mathbf{n}), b_2(\mathbf{n})\big),\ldots,
\big[a_{z_{\mathbf{n}}}(\mathbf{n}),
b_{z_{\mathbf{n}}}(\mathbf{n})\big).
\end{equation}

Now, let $\mathbf{m}=\big(m^{(1)}, m^{(2)}\big)=(1, n)$, and let
$z_{\mathbf{m}}$ denote the number of service completions, at the time of
which there become $m^{(1)}$, $m^{(2)}$ numbers of
customers in the corresponding queueing systems, the order numbers
of which are indicated by the upper index.

Then, one
can define the sequences $a_1(\mathbf{m})$,
$a_2(\mathbf{m})$,\ldots,
$a_{z_{\mathbf{m}}}(\mathbf{m})$ and
$b_1(\mathbf{m})$, $b_2(\mathbf{m})$,\ldots,
$b_{z_{\mathbf{m}}}(\mathbf{m})$ and the intervals
\begin{equation}\label{eq-7.2}
\big[a_1(\mathbf{m}), b_1(\mathbf{m})\big),
\big[a_2(\mathbf{m}), b_2(\mathbf{m})\big),\ldots,
\big[a_{z_{\mathbf{m}}}(\mathbf{m}),
b_{z_{\mathbf{m}}}(\mathbf{m})\big)
\end{equation}
by the similar way as before. Now let $z^\prime_{\mathbf{m}^\prime}$ be a random variable having the same distribution as $z_{\mathbf{m}}$, and let
\begin{equation}\label{eq-7.3}
\big[a_1(\mathbf{m}^\prime), b_1(\mathbf{m}^\prime)\big),
\big[a_2(\mathbf{m}^\prime), b_2(\mathbf{m}^\prime)\big),\ldots,
\big[a_{z^\prime_{\mathbf{m}^\prime}}(\mathbf{m}^\prime),
b_{z^\prime_{\mathbf{m}^\prime}}(\mathbf{m}^\prime)\big)
\end{equation}
be the tagged intervals, the total number of which is $z^\prime_{\mathbf{m}^\prime}$.

The number of intervals \eqref{eq-7.1} that are contained in \eqref{eq-7.2} are denoted by $z_{\mathbf{m},\mathbf{n}}$, and they are associated with the transition $(1, n)\to(0,n)$. Assuming that the number of  the virtual transition $(-1,n)\to(0,n)$ has the same distribution as $z_{\mathbf{m},\mathbf{n}}$ and independent of it, denote it by
$z^\prime_{\mathbf{m}^\prime,\mathbf{n}}$. Then
\begin{equation}\label{eq-7.6}
z_{\mathbf{n}}=\sum_{\mathbf{k}\in|\mathcal{M}^+(\mathbf{n})|}z_{\mathbf{k},\mathbf{n}},
\end{equation}
and let
\begin{equation}\label{eq-7.4}
z^*_{\mathbf{n}}=z_{\mathbf{n}}+z^\prime_{\mathbf{m}^\prime,\mathbf{n}}.
\end{equation}
In \eqref{eq-7.6} and \eqref{eq-7.4} we first find the sum over $|\mathcal{M}^+(\mathbf{n})|$ and then add the element $z^\prime_{\mathbf{m}^\prime,\mathbf{n}}$ for the reason that in the model with two queueing system we cannot provide summation over $\mathcal{M}^+(\mathbf{n})$ in the boundary case directly.

If $\mathbf{n}$ is from the interior, then
\begin{equation}\label{eq-7.5}
z_{\mathbf{n}}=\sum_{\mathbf{k}\in\mathcal{M}^+(\mathbf{n})}z_{\mathbf{k},\mathbf{n}},
\end{equation}
and the distributions of $z^*_{\mathbf{n}}$ given by \eqref{eq-7.4} and $z_{\mathbf{n}}$ given by \eqref{eq-7.5} are the same.

\section{Proof of Theorem \ref{t4}}\label{S8}
Note, first that in the case of the one-dimensional random walk the results automatically follow
from Proposition \ref{c2}. Indeed, with $\mathsf{E}Z_1=1$
the branching process $Z_n$ is a martingale, and
all the required results follow from this property. In the case
of the two-dimensional random walk, the results cannot be retrieved from Theorems \ref{t1}
or \ref{t3} by the similar way. The properties of random fields
reducing them directly to martingales similarly to those of random
processes are unknown. Hence, the proof of this theorem should be
produced independently of the statements of Theorems \ref{t1} and \ref{t3}.

Consider the model of two independent queueing systems of Section \ref{S5.2}. It follows from the results of the Appendix, specifically from (A.6) that if $\mathsf{E}\{f[\mathcal{X}(\mathbf{n}_0)]\}<\infty$ for some vector $\mathbf{n}_0\in\mathbb{Z}^2_+\setminus\{{\mathbf{0}}\}$, then for any two vectors $\mathbf{n}_1\in\mathbb{Z}^2_+\setminus\{\mathbf{0}\}$ and $\mathbf{n}_2\in\mathbb{Z}^2_+\setminus\{\mathbf{0}\}$
\begin{equation}\label{eq-8.5}
\mathsf{E}\{f[\mathcal{X}(\mathbf{n}_1)]\}=
2^{d_0(\mathbf{n}_2)-d_0(\mathbf{n}_1)}\mathsf{E}\{
f[\mathcal{X}(\mathbf{n}_2)]\}.
\end{equation}
Taking for instance $\mathbf{n}_0=\mathbf{1}_1$ it is readily seen from Properties (2) and (3) in Section \ref{S3.2} that $\mathsf{E}\{f[\mathcal{X}(\mathbf{1}_1)]\}<\infty$, and, hence, \eqref{eq-8.5} is true.

It follows from \eqref{eq-8.5} that for all $\mathbf{n}\geq\mathbf{1}$,
\begin{equation}\label{eq-8.1}
\mathsf{E}\{f[\mathcal{X}(\mathbf{n})]\}=\mathsf{E}\{f[\mathcal{X}(\mathbf{1})]\}.
\end{equation}
From same Relation \eqref{eq-8.5} we have
\begin{equation}\label{eq-8.2}
\mathsf{E}\{f[\mathcal{X}(\mathbf{1}_1)]\}=\mathsf{E}\{f[\mathcal{X}(\mathbf{1}_2)]\}=\frac{1}{2}\mathsf{E}\{f[\mathcal{X}(\mathbf{1})]\}.
\end{equation}
In turn, from \eqref{eq-8.1} and \eqref{eq-8.2} we can establish the similar relationships for $\mathsf{E}\{f(\mathbf{n})\}$, $\mathbf{n}\in\mathbb{Z}^2\setminus\{\mathbf{0}\}$. Taking into account that
$$
\mathsf{E}\{f[\mathcal{X}(\mathbf{n})]\}=\sum_{\mathbf{m}\in\mathcal{X}(\mathbf{n})}\mathsf{E}\{f(\mathbf{m})\}.
$$
we have as follows. For all $\mathbf{n}\geq\mathbf{1}$ from \eqref{eq-8.1} we obtain
\begin{equation}\label{eq-8.3}
\mathsf{E}\{f(\mathbf{n})\}=\frac{1}{4}\mathsf{E}\{f[\mathcal{X}(\mathbf{1})]\}.
\end{equation}
In turn, from \eqref{eq-8.2} we obtain
\begin{equation}\label{eq-8.4}
\mathsf{E}\{f(\mathbf{1}_1)\}=\mathsf{E}\{f(\mathbf{1}_2)\}=\frac{1}{2}\times\frac{1}{2}\mathsf{E}\{f[\mathcal{X}(\mathbf{1})]\}=\frac{1}{4}\mathsf{E}\{f[\mathcal{X}(\mathbf{1})]\}.
\end{equation}
So, combining \eqref{eq-8.3} and \eqref{eq-8.4}, we arrive at the conclusion that for all $\mathbf{n}\in\mathbb{Z}^2\setminus\{\mathbf{0}\}$
\begin{equation}\label{eq-8.11}
\mathsf{E}\{f(\mathbf{n})\}=\frac{1}{4}\mathsf{E}\{f[\mathcal{X}(\mathbf{1})]\}=c.
\end{equation}
Our aim now is to prove that the constant $c$ in \eqref{eq-8.11} is equal to one.

Prove first \eqref{eq-3.7} and \eqref{eq-3.8}. We have
$$
\mathsf{E}\left\{\overrightarrow{f}(\mathbf{1}_1)\right\}=\mathsf{E}\left\{\overrightarrow{f}(-\mathbf{1}_1)\right\}=\mathsf{E}\left\{\overrightarrow{f}(\mathbf{1}_2)\right\}
=\mathsf{E}\left\{\overrightarrow{f}(-\mathbf{1}_2)\right\}=\frac{1}{4}.
$$
Let $\mathbf{n}\in\mathbb{Z}^2\setminus\{\mathbf{0}\}$. If $\mathbf{n}$ is on the boundary, then the set $\mathcal{M}^-(\mathbf{n})$ contains only a single vector, while the set $\mathcal{M}^+(\mathbf{n})$ contains three vectors. If $\mathbf{n}$ is from the interior, then each of the sets $\mathcal{M}^-(\mathbf{n})$ and $\mathcal{M}^+(\mathbf{n})$ contains two vectors (see Figure 1). Since all states are equally likely, we obtain:
\begin{equation*}
\mathsf{E}\left\{\overrightarrow{f}(\mathbf{n})\right\}=\begin{cases}\mathsf{E}\big\{\overrightarrow{f}(\mathbf{1}_1)\big\}=\frac{1}{4}, &\text{if} \ \mathbf{n} \ \text{on boundary},\\
2\mathsf{E}\big\{\overrightarrow{f}(\mathbf{1}_1)\big\}=\frac{1}{2}, &\text{if} \ \mathbf{n} \ \text{in interior},
\end{cases}
\end{equation*}
\begin{equation*}
\mathsf{E}\left\{\overleftarrow{f}(\mathbf{n})\right\}=\begin{cases}3\mathsf{E}\big\{\overrightarrow{f}(\mathbf{1}_1)\big\}=\frac{3}{4}, &\text{if} \ \mathbf{n} \ \text{on boundary},\\
2\mathsf{E}\big\{\overrightarrow{f}(\mathbf{1}_1)\big\}=\frac{1}{2}, &\text{if} \ \mathbf{n} \ \text{in interior}.
\end{cases}
\end{equation*}
Since $\mathsf{E}\{f(\mathbf{n})\}=\mathsf{E}\big\{\overrightarrow{f}(\mathbf{n})\big\}+\mathsf{E}\big\{\overleftarrow{f}(\mathbf{n})\big\}$, then the constant $c$ in \eqref{eq-8.11} must be equal to 1.

\section{Crossings states and sets of states in random walks defined in finite areas}\label{S9}
The aim of this section is to develop the results on crossings states and sets of states for random walks of higher dimension than two. In this section we consider random walks in $[-N,N]^d$ and $[0,N]^d$. The random walk in $[-N,N]^d$ is defined as follows:
\begin{eqnarray}
\mathbf{S}_0&=&\mathbf{0},\label{eq-6.1}\\
\mathbf{S}_{t}&=&\mathbf{S}_{t-1}+\mathbf{e}_t([-N,N]^d), \quad
t\geq1,\label{eq-6.2}
\end{eqnarray}
where in \eqref{eq-6.1} and later on $\mathbf{0}$ is the $d$-dimensional zero and the vector $\mathbf{e}_t([-N,N]^d)$ is
\begin{equation*}
\mathbf{e}_t([-N,N]^d)=\begin{cases}
\mathbf{e}_t, &\mbox{if}\  |\mathbf{S}_{t-1}+\mathbf{e}_t|\leq N\mathbf{1};\\
\mathbf{0}, &\mbox{otherwise},
\end{cases}
\end{equation*}
and $\mathbf{e}_t=\mathbf{e}_t(\mathbb{Z}^d)$, where the random vector $\mathbf{e}_t(\mathbb{Z}^d)$ for the random walk in $\mathbb{Z}^d$ is defined similarly to that $\mathbf{e}_t(\mathbb{Z}^2)$ is defined for the random walk in $\mathbb{Z}^2$. Namely, let $\mathbf{1}_{i}$ denote the vector, the
$i$th component of which is 1, and the rest component are 0. Then,
the vector $\mathbf{e}_t(\mathbb{Z}^d)$ is one of the $2d$ vectors
$\{\pm\mathbf{1}_{i}, i=1,2,\ldots,d\}$ that is randomly chosen with
probability $1/(2d)$ independently of the other vectors and the
history of the random walk.

The random walk in $[0,N]^d$ is, in turn, the reflected version of the random walk in $[-N,N]^d$, and it is defined as
\begin{eqnarray*}
\mathbf{S}_0&=&\mathbf{0},\\
\mathbf{S}_{t}&=&\mathbf{S}_{t-1}+\mathbf{e}_t([0,N]^d), \quad
t\geq1,
\end{eqnarray*}
where the vector $\mathbf{e}_t([0,N]^d)$ is
\begin{equation*}
{\mathbf{e}}_t([0,N]^d)=\begin{cases} \mathbf{e}_t, &\text{if}\
{\mathbf{S}}_{t-1}+\mathbf{e}_t\leq N\mathbf{1},\\ &\text{and}\
 {S}_{t-1}^{(i)}+e_t^{(i)}\geq0 \ \text{for all} \ i=1,2,\ldots,d;\\
-\mathbf{e}_t, &\text{if}\
{\mathbf{S}}_{t-1}+\mathbf{e}_t\leq N\mathbf{1},\\ &\text{and}\
{S}_{t-1}^{(i)}+e^{(i)}_t=-1\ \text{for one of} \ i=1,2,\ldots,d;\\
\mathbf{0}, &\mbox{otherwise},
\end{cases}
\end{equation*}
and $\mathbf{e}_t=\big(e_t^{(1)},
e_t^{(2)},\ldots, e_t^{(d)}\big)=\mathbf{e}_t(\mathbb{Z}^d)$.

Note first, that all the definitions of Section \ref{S2} can be automatically developed to the $d$-dimensional random walks considered here. The definition of $\mathcal{X}(\mathbf{n})$, $\mathcal{N}^+(n)$, $\mathcal{N}(n)$, lower and upper sets $\mathcal{M}^-(\mathbf{n})$ and $\mathcal{M}^+(\mathbf{n})$, positive upper set $|\mathcal{M}^+(\mathbf{n})|$, crossings the sets $f(\mathcal{Z})$, $\overrightarrow{f}(\mathcal{Z})$ and $\overleftarrow{f}(\mathcal{Z})$ and all other derivative notions remain similar to those in Section \ref{S2} given originally for the two-dimensional random walk. For instance, the total number of elements in the sets $\mathcal{M}^-(\mathbf{n})$ and $\mathcal{M}^+(\mathbf{n})$ are $d-d_0(\mathbf{n})$ and $d+d_0(\mathbf{n})$, respectively, and the total number of elements of the set $|\mathcal{M}^+(\mathbf{n})|$ is $d$.

The concepts of the Markov fields $P_{\mathbf{n}}$, $Q_{\mathbf{n}}$ and the Markov chain $R_n$ can be also developed. Specifically, the properties of the Markov fields $P_{\mathbf{n}}$, $Q_{\mathbf{n}}$ the Markov chain $R_n$ are given below.

For the field $P_{\mathbf{n}}$ and $p(\mathbf{m},\mathbf{n})$ we have the properties:
\begin{enumerate}
  \item $P_{\mathbf{0}}=1$.
  \smallskip
  \item $\mathsf{P}\{p(\mathbf{0},\mathbf{1}_i)=1\}=\mathsf{P}\{p(\mathbf{0},-\mathbf{1}_i)=1\}=
1/(2d)$, $i=1,2,\ldots,d$,\\ and
$\sum_{i=1}^{d}\big(p(\mathbf{0},\mathbf{1}_i)+p(\mathbf{0},-\mathbf{1}_i)\big)=1.$

  \smallskip
  \item $\mathsf{P}\{p(\mathbf{1}_i, \mathbf{0})=1\}=\mathsf{P}\{p(-\mathbf{1}_i, \mathbf{0})=1\}=
1/(2d)$, $i=1,2,\ldots,d$,\\ and
  $\sum_{i=1}^{d}\big(p(\mathbf{0}\mathbf{1}_i, \mathbf{0})+p(-\mathbf{1}_i, \mathbf{0})\big)=1.$

  \smallskip

  \item if $\mathbf{m}\in\mathcal{M}^-(\mathbf{n})$, $\|\mathbf{n}\|\geq2$, then
  $$
  \mathsf{P}\left\{p(\mathbf{m},\mathbf{n})=n~\Big|~\sum_{\mathbf{m}^*\in\mathcal{M}^-(\mathbf{m})}p(\mathbf{m}^*,\mathbf{m})=k\right\}=\binom{k+n-1}{n}\left(\frac{1}{2d}\right)^n\left(\frac{2d-1}{2d}\right)^k.
  $$
  \smallskip
  \item if $\mathbf{m}\in\mathcal{M}^+(\mathbf{n})$, $\mathbf{n}\neq\mathbf{0}$, then
  $$
  \mathsf{P}\left\{p(\mathbf{m},\mathbf{n})=n~\Big|~\sum_{\mathbf{m}^*\in\mathcal{M}^+(\mathbf{m})}p(\mathbf{m}^*,\mathbf{m})=k\right\}=\binom{k+n-1}{n}\left(\frac{1}{2d}\right)^n\left(\frac{2d-1}{2d}\right)^k.
  $$

\end{enumerate}

For the field $Q_{\mathbf{n}}$ and $q(\mathbf{m},\mathbf{n})$ we have the properties:
\begin{enumerate}
  \item $Q_{\mathbf{0}}=1$.
  \smallskip
  \item $\mathsf{P}\{q(\mathbf{0},\mathbf{1}_i)=1\}=
1/d$, $i=1,2,\ldots, d$, and $\sum_{i=1}^{d}q(\mathbf{0},\mathbf{1}_i)=1.$

  \smallskip
  \item $\mathsf{P}\{q(\mathbf{1}_i,\mathbf{0})=1\}
=1/d$, and $\sum_{i=1}^{d}q(\mathbf{1}_i,\mathbf{0})=1.$

  \smallskip
  \item if $\mathbf{m}\in|\mathcal{M}^+(\mathbf{n})|$ and $\mathbf{n}\neq\mathbf{0}$, then
  $$
  \begin{aligned}
  &\mathsf{P}\left\{q(\mathbf{m},\mathbf{n})=n~\Big|~\sum_{\mathbf{m}^*\in|\mathcal{M}^+(\mathbf{m})|}q(\mathbf{m}^*,\mathbf{m})=k\right\}\\
  &=\binom{k+n-1}{n}\left(\frac{1}{2d}\right)^n\left(\frac{2d-1}{2d}\right)^k.
  \end{aligned}
  $$
  \smallskip
  \item if $\mathbf{m}\in\mathcal{M}^-(\mathbf{n})$, $d_0(\mathbf{m})>0$ and $\|\mathbf{n}\|\geq2$,  then
  $$
  \begin{aligned}
  &\mathsf{P}\left\{q(\mathbf{m},\mathbf{n})=n~\Big|~\sum_{\mathbf{m}^*\in\mathcal{M}^-(\mathbf{m})}q(\mathbf{m}^*,\mathbf{m})=k\right\}\\
  &=\binom{k+n-1}{n}\left(\frac{r(\mathbf{n})}{2d}\right)^n
  \left(\frac{2d-r(\mathbf{n})}{2d}\right)^k,
  \end{aligned}
  $$
where $r(\mathbf{n})$ is the rank of the vector $\mathbf{n}\in|\mathcal{M}^+(\mathbf{m})|$. (Definition \ref{d2} for the rank remains the same as in the two-dimensional case, and $r(\mathbf{n})$ can be either one or two. The number of vectors in the set $|\mathcal{M}^+(\mathbf{m})|$ having rank two is equal to $d_0(\mathbf{m})$. In Figure 4 this property is illustrated for $d=3$.)
  \smallskip
  \item if $\mathbf{m}\in\mathcal{M}^-(\mathbf{n})$, $d_0(\mathbf{m})=0$ and $\|\mathbf{n}\|\geq2$, then
  $$
  \begin{aligned}
  &\mathsf{P}\left\{q(\mathbf{m},\mathbf{n})=n~\Big|~\sum_{\mathbf{m}^*\in\mathcal{M}^-(\mathbf{m})}q(\mathbf{m}^*,\mathbf{m})=k\right\}\\
  &=\binom{k+n-1}{n}\left(\frac{1}{2d}\right)^n\left(\frac{2d-1}{2d}\right)^k.
  \end{aligned}
  $$
\end{enumerate}

\begin{figure}
\includegraphics[width=10cm, height=12cm]{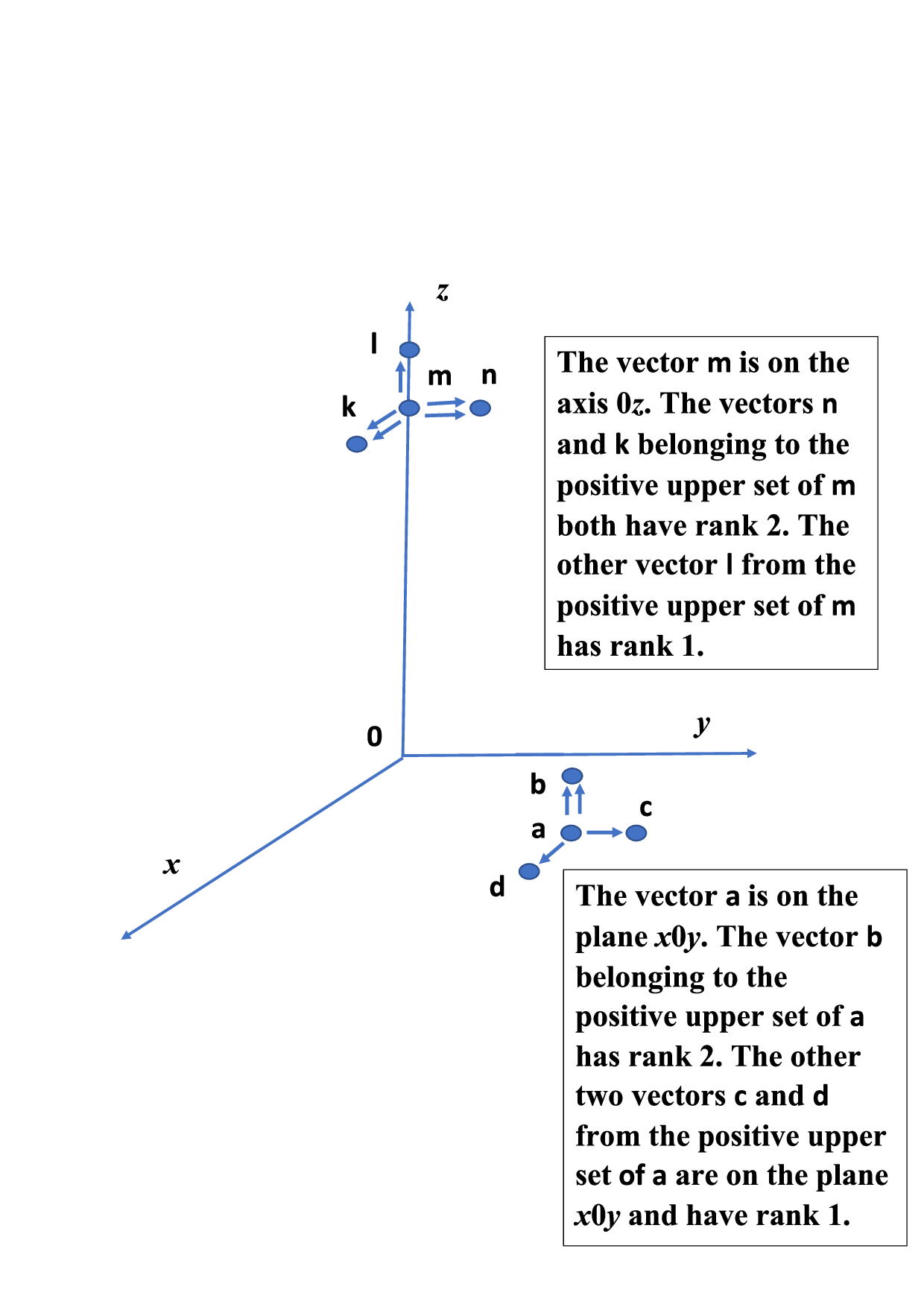}
\caption{Illustration of rank 1 and rank 2 vectors in three-dimensional random walks.}
\end{figure}

For Markov chain $R_n$ we have the properties:
\begin{enumerate}
  \item $R_0=1$.
  \item $\mathsf{P}\{R_{n}=k|R_{n-1}=m\}=\binom{k+m-1}{k}\left(\frac{C(n,d)}
{C_0(n,d)+2C(n,d)}\right)^m\left(\frac{C_0(n,d)+C(n,d)}{C_0(n,d)+2C(n,d)}\right)^k$,
where
$$
C_0(n,d)=\sum_{i=1}^{d-1}(d-i)2^{i+1}\binom{d}{i}\binom{n-1}{i-1},
$$
$$
C(n,d)=\sum_{i=1}^{d}i2^i\binom{d}{i}\binom{n-1}{i-1}.
$$
\end{enumerate}

Note, that for the random walk in $[-N,N]^d$ the upper set $\mathcal{M}^+(\mathbf{n})$ is not always defined. For instance, if $\mathbf{n}=(N,N,\ldots,N)=N\mathbf{1}$ or $\mathbf{n}=(-N,-N,\ldots,-N)=-N\mathbf{1}$ (the margin states), then $\mathcal{M}^+(\mathbf{n})=\emptyset$. In other cases, where $\mathbf{n}$ is a boundary element, the upper set $\mathcal{M}^+(\mathbf{n})$ is defined, but the number of vectors is less than $d$. For instance, if $\mathbf{n}=N\mathbf{1}_1$, then the number of vectors in $\mathcal{M}^+(\mathbf{n})$ is $d-1$. This makes the study of crossings sets and states complicated in general. However, considering the states of the random walk in smaller area such as the rectangle
$\mathcal{S}\subseteq[-(N-1), (N-1)]^d$ $(N>1)$ enables us to avoid the complication and make the extension of the earlier results smooth. We have the following result.

\begin{thm}\label{t5} For any $\mathbf{n}\in\mathcal{S}\setminus\{\mathbf{0}\}$ the statements of Theorems \ref{t2}, \ref{t1}, and \ref{t3}, in which the random objects $P_{\mathbf{n}}$, $Q_{\mathbf{n}}$ and $R_n$ are redefined in this section, hold true. Relations \eqref{eq-3.7}, \eqref{eq-3.8} and \eqref{eq-3.9} in the given case are, respectively, rewritten as follows.
\begin{equation*}
\mathsf{E}\left\{\overrightarrow{f}(\mathbf{n})\right\}=2^
{d_0(\mathbf{n})-d}
\mathsf{E}\left\{\overrightarrow{f}[\mathcal{X}(\mathbf{n})]\right\}
=\frac{d-d_0(\mathbf{n})}{2d},
\end{equation*}
\begin{equation*}
\mathsf{E}\left\{\overleftarrow{f}(\mathbf{n})\right\}=2^
{d_0(\mathbf{n})-d}
\mathsf{E}\left\{\overleftarrow{f}[\mathcal{X}(\mathbf{n})]\right\}
=\frac{d+d_0(\mathbf{n})}{2d},
\end{equation*}
and
\begin{equation*}
\mathsf{E}\left\{{f}(\mathbf{n})\right\}=2^
{d_0(\mathbf{n})-d}
\mathsf{E}\left\{{f}[\mathcal{X}(\mathbf{n})]\right\} =1.
\end{equation*}
\end{thm}

\begin{proof}
The proof of statements related to Theorems \ref{t2} and \ref{t1} is based on straightforward extension of the proof of Theorems \ref{t2} and \ref{t1} given in Sections \ref{S5} and \ref{S6}, respectively. The proof of the statement related to Theorem \ref{t3} is also an extension of the proof of corresponding Theorem \ref{t3}, but the cases where the required state $\mathbf{n}$ is on the boundary of the main subspace $[0,N]^d$ (in the case of dimension three it is the positive three dimensional subspace $xyz$) should be studied in more details.
Specifically,  in the proof of Theorem \ref{t3} the boundary case is associated with $d_0(\mathbf{n})=1$. In the present proof of the corresponding statement associated with Theorem \ref{t3}, each boundary case is associated with the inequality $1\leq d_0(\mathbf{n})\leq d-1$, and the number of the series of the intervals like those given in \eqref{eq-7.3} must be $d_0(\mathbf{n})$. Then, instead of \eqref{eq-7.4} we have:
$$
z^*_{\mathbf{n}}=z_{\mathbf{n}}+\sum_{i=1}^{d_0(\mathbf{n})}z^\prime_{\mathbf{m}^\prime_i,\mathbf{n}},
$$
where $z^\prime_{\mathbf{m}^\prime_1,\mathbf{n}}$, $z^\prime_{\mathbf{m}^\prime_2,\mathbf{n}}$,\ldots, $z^\prime_{\mathbf{m}^\prime_{d_0(\mathbf{n}),\mathbf{n}}}$ are independent and identically distributed, and $z_{\mathbf{n}}$ is given by \eqref{eq-7.6}. If $\mathbf{n}$ is from the interior, then \eqref{eq-7.5} here is true as well. In the proof of the statements related to Theorem \ref{t4}, the proof in the case when $\mathbf{n}\geq\mathbf{1}$ is the same in Theorem \ref{t4}. So, Relation \eqref{eq-8.1} holds in this case as well. Instead of \eqref{eq-8.2} we have
$$
\mathsf{E}\{f[\mathcal{X}(\mathbf{1}_i)]\}=\frac{1}{d}, \quad i=1,2,\ldots,d.
$$
Then, for $\mathbf{n}\geq\mathbf{1}$ instead of \eqref{eq-8.3} we have
\begin{equation}\label{eq-9.1}
\mathsf{E}\{f(\mathbf{n})\}=\frac{1}{2d}\mathsf{E}\{f[\mathcal{X}(\mathbf{1})]\},
\end{equation}
and similarly to that in the proof of Theorem \ref{t4} (see \eqref{eq-8.4})
\begin{equation}\label{eq-9.2}
\mathsf{E}\{f(\mathbf{1}_i)\}=\mathsf{E}\{f(-\mathbf{1}_i)\}=\frac{1}{2d}\mathsf{E}\{f[\mathcal{X}(\mathbf{1})]\}, \quad i=1,2,\ldots,d.
\end{equation}
Using Relation (A.4), in fact we obtain more general result than \eqref{eq-9.1} and \eqref{eq-9.2}. Namely, \eqref{eq-9.1} is true for all $\mathbf{n}\in\mathcal{S}\setminus\{\mathbf{0}\}$. That is, similarly to \eqref{eq-8.11} we have
\begin{equation}\label{eq-9.3}
\mathsf{E}\{f(\mathbf{n})\}=c
\end{equation}
for all $\mathbf{n}\in\mathcal{S}\setminus\{\mathbf{0}\}$. The next part of the proof is similar to that of Theorem \ref{t4}. Since all states $\mathbf{n}$ are equally likely, from the total expectation formula we have:
\begin{equation}\label{eq-9.4}
\mathsf{E}\left\{\overrightarrow{f}(\mathbf{n})\right\}=\frac{\text{The number of vectors in}\ \mathcal{M}^-(\mathbf{n})}{2d}=\frac{d-d_0(\mathbf{n})}{2d},
\end{equation}
\begin{equation}\label{eq-9.5}
\mathsf{E}\left\{\overleftarrow{f}(\mathbf{n})\right\}=\frac{\text{The number of vectors in}\ \mathcal{M}^+(\mathbf{n})}{2d}=\frac{d+d_0(\mathbf{n})}{2d}.
\end{equation}
From \eqref{eq-9.4} and \eqref{eq-9.5} we finally arrive at the conclusion that the constant $c$ in \eqref{eq-9.3} is equal to one.
\end{proof}

\section{On random walks in $\mathbb{Z}^d$, $d>2$.}\label{S10}

In this section we discuss crossing states in random walks in $\mathbb{Z}^d$, $d>2$. The random walks of dimension three or higher are transient, therefore the question about crossing states makes sense under the condition that it is eventually returns to the origin. Let $\mathfrak{A}$ denote the event ``the
random walk that starts from the origin returns to the original point again". Under this condition, a natural question is as follows. Let $\mathbf{n}\in\mathbb{Z}^d\setminus\{\mathbf{0}\}$. Is $\mathsf{E}\{f(\mathbf{n})~|~\mathfrak{A}\}=1$ true?

Unfortunately, the condition $\mathfrak{A}$ makes the approach suggested in the present paper impossible to address this question. The level-crossing approach that was first considered in Section \ref{S3} for one-dimensional random walk and then developed for two-dimensional random walk and multidimensional random walk of an arbitrary dimension $d$ but defined in bounded areas assumes that the consecutive intervals such as \eqref{eq-6.7} have a regenerative structure, and the number of merged points in these intervals do not depend on $\|\mathbf{n}\|$. Under the condition $\mathfrak{A}$ this dependence exists. If the state $\mathbf{n}$ is ``closer to zero", then the probability to return to the origin is higher compared to the case when the state $\mathbf{n}$ is ``far from zero". The set of sample paths that starts from state $\mathbf{n}$ and returns to the same state $\mathbf{n}$ in the case when $\mathbf{n}$ is ``closer to zero" is richer compared to the set of sample paths of the similar time period when $\mathbf{n}$ is ``far from zero". Furthermore, for $d\geq5$ the $d$-dimensional random walk is strongly transient \cite{Hughes}, that is, the conditional expected length of a sojourn time from the origin to the origin
given $\mathfrak{A}$ is finite. This means that $\mathsf{E}\{f(\mathbf{n})~|~\mathfrak{A}\}=1$ cannot be true for $d\geq5$ and probably is not true for $d=3$ or $d=4$ either.

Our conjecture in the relation to this case is as follows. For random walks defined in $\mathbb{Z}^d$, $d>2$, the only inequality $\mathsf{E}\{f(\mathbf{n})~|~\mathfrak{A}\}<1$ is true ($\mathbf{n}\in\mathbb{Z}^d\setminus\{\mathbf{0}\}$). Furthermore, $\mathsf{E}\{f(\mathbf{n})~|~\mathfrak{A}\}$ decreases as $\|\mathbf{n}\|$ increases, and if $d\geq5$, then $\mathsf{E}\{f(\mathbf{n})~|~\mathfrak{A}\}$ vanishes as $\|\mathbf{n}\|$ tends to infinity.

\section{Numerical study}\label{S_num} In this section we provide numerical results for level-crossings of three different random walks considered in the paper: one-dimensional random walks, two-dimensional random walk and three-dimensional random walk in certainly defined bounded area. In our simulations the only random walks that eventually returned to the origin after no more than $100,000$ steps were counted. Each output result is obtained based on $1,000,000$ runs.

\subsection{One-dimensional random walk} It turns out that for this random walk only $997,482$ runs out of $1,000,000$ are  successfully terminated. That is, $997482$ simulated random walks eventually returned to the original point for no more than $100,000$ steps. Simulation shows stable results in general. We studied numerically the number of crossings levels $1$, $2$, $3$, $5$, $10$, $20$, $50$ and $100$ that are reflected in Table \ref{T1}.
\begin{table}
    \begin{center}
        \caption{Level-crossings in the one-dimensional random walk}\label{T1}
            \begin{tabular}{||c|c|c|c||}
            \hline
             Level $n$ & Estimated $\mathsf{E}f(n)$ & Estimated $\mathsf{E}\overrightarrow{f}(n)$ & Estimated $\mathsf{E}\overleftarrow{f}(n)$\\
            \hline\hline
 1 & 0.9962 & 0.5000 & 0.4964\\
 2 & 0.9898 & 0.4964 & 0.4935\\
 3 & 0.9848 & 0.4935 & 0.4913\\
 5 & 0.9759 & 0.4896 & 0.4863\\
 10& 0.9568 & 0.4795 & 0.4773\\
 20& 0.9111 & 0.4563 & 0.4547\\
 50& 0.7542 & 0.3780 & 0.3763\\
100& 0.5336 & 0.2679 & 0.2657\\
            \hline
            \end{tabular}
    \end{center}
\end{table}

It is seen from Table \ref{T1} that the results for the crossings levels from $1$ to $20$ show the results that are close to theoretical. The results for levels $50$ and $100$ are essentially deviated. Notice that with increasing $n$, the obtained numerical values for $\mathsf{E}f(n)$, $\mathsf{E}\overrightarrow{f}(n)$ and $\mathsf{E}\overleftarrow{f}(n)$ decrease in $n$. The reason of this decreasing and the biased values for $n=50$ and $n=100$ can be explained by the fact that only $99.7\%$ of simulation runs are successfully terminated. A more detailed explanation is given for the case of the two-dimensional random walk in the section below.

\subsection{Two-dimensional random walk} Two-dimensional random walk is central to this paper. However, the numerical analysis is not so successful as in one-dimensional case. If in the one-dimensional case the number of successfully terminated random walks was about $99.7\%$, in the case of the two-dimensional random walk the number of successfully terminated random walks is approximately $78\%$. This percent cannot be substantially improved by increasing the number of steps. Simulation will become time consuming with no visible result. In addition, the variance of the random fields increases more rapidly as $\|\mathbf{n}\|$ increases to infinity compared to the branching process with increasing $n$ to infinity. For these reasons, the numerical results in this case are considered for the only states located close to the origin.

\begin{table}
    \begin{center}
        \caption{State-crossings in the two-dimensional random walk}\label{T3}
            \begin{tabular}{||c|c|c|c||}
            \hline
             State $\mathbf{n}$ & Estimated $\mathsf{E}f(\mathbf{n})$ & Estimated $\mathsf{E}\overrightarrow{f}(\mathbf{n})$ & Estimated $\mathsf{E}\overleftarrow{f}(\mathbf{n})$\\
            \hline\hline
 ($0$ $-1$) & 0.7823 & 0.2507 & 0.5316\\
 ($0$  $1$) & 0.7779 & 0.2488 & 0.5291\\
 ($1$  $0$) & 0.7798 & 0.2511 & 0.5287\\
 ($-1$ $0$) & 0.7780 & 0.2494 & 0.5286\\
 ($1$ $1$) & 0.6625 & 0.3584 & 0.3042\\
($-1$ $-1$)& 0.6637 & 0.3605 & 0.3032\\
($1$ $-1$) & 0.6642 & 0.3600 & 0.3042\\
($-1$ $1$) & 0.6633 & 0.3598 & 0.3035\\
            \hline
            \end{tabular}
    \end{center}
\end{table}

Now we discuss the obtained results in Table \ref{T3} starting first from the states ($0$ $-1$), ($0$ $1$), ($1$ $0$) and ($-1$ $0$). The obtained frequencies of up-directed crossings  in all of the four cases are close to the theoretical value of the mean $0.25$. However, the frequencies of down-directed crossings are essentially biased. The last can be explained as follows. Longer random walks having length of more than $100,000$ steps affect on the number of down-directed crossings essentially. Absence of a large number of long random walks from the simulation series thus essentially decreases the number of down-directed crossings compared the number that should be. The cut down number of down-directed crossings in turn changes the total number of undirected crossings as well. A similar picture is regarding the states ($1$ $1$), ($-1$ $-1$), ($1$ $-1$) and ($-1$ $1$). The fraction of up-directed crossings is slightly greater than the fraction of down-directed crossings because of the missing longer simulation series. As well, the missing longer series affecting the number of down-directed crossings of the states ($0$ $-1$), ($0$ $1$), ($1$ $0$) and ($-1$ $0$) in turn affect to the number of up-directed crossings the states ($1$ $1$), ($-1$ $-1$), ($1$ $-1$) and ($-1$ $1$) since these states are neighbors with respect to the aforementioned states.

\subsection{Three-dimensional random walk in a bounded region} In this numerical example we simulate the numbers of crossings states in $[-5, 5]^3$. In this case, all the random walks were successfully terminated. Table \ref{T4} presents the simulation results. The obtained results are close to the obtained theoretical results.

\begin{table}
    \begin{center}
        \caption{State-crossings in a three-dimensional random walk in a bounded region}\label{T4}
            \begin{tabular}{||c|c|c|c||}
            \hline
             State $\mathbf{n}$ & Estimated $\mathsf{E}f(\mathbf{n})$ & Estimated $\mathsf{E}\overrightarrow{f}(\mathbf{n})$ & Estimated $\mathsf{E}\overleftarrow{f}(\mathbf{n})$\\
            \hline\hline
 ($0$ $0$ $2$) & 1.0017 & 0.1665 & 0.8352\\
 ($0$  $2$ $2$) & 1.0039 & 0.3348 & 0.6691\\
 ($0$  $0$ $-2$) & 1.0034 & 0.1671 & 0.8363\\
 ($0$ $-2$ $-2$) & 1.0018 & 0.3340 & 0.6678\\
 ($2$ $2$ $2$) & 0.9989 & 0.4991 & 0.4998\\
($-2$ $-2$ $-2$)& 0.9983 & 0.4986 & 0.4997\\
            \hline
            \end{tabular}
    \end{center}
\end{table}

\section{Summary of the results and discussion}\label{S11}

In this paper, a comprehensive analysis of the numbers of
state-crossings and the numbers of crossings specifically defined
sets of states is provided. The explicit representations for
probability distributions and expectations of the numbers of
crossings the states
$\mathbf{n}\in\mathbb{Z}^d\setminus\{\mathbf{0}\}$ and the sets of
states $\mathcal{X}(\mathbf{n})$ and $\mathcal{N}(\|\mathbf{n}\|)$
are obtained.

\begin{table}
    \begin{center}
            \caption{Comparison table for the property of conditional
            expectations of the numbers of state-crossings and crossings the sets
            of states}\label{T2}
        \begin{tabular}{||c|c||}\hline
        One-dimensional & Two-dimensional \\
         random walk    &  random walk     \\
        \hline
         The expectations & The conditional expectations  \\
         are the same for & are the same for all    \\
         all values $n$, $n\neq0$   &  vectors $\mathbf{n}$, $\mathbf{n}\neq\mathbf{0}$\\
         \textit{(true, false)} &\textit{(true, false)}\\
        \hline\hline
        & \\
for $\overrightarrow{f}(n)$ -- \textit{true}& for
$\overrightarrow{f}(\mathbf{n})$ -- \textit{false} in general\\
& and \textit{true} for $|\mathbf{n}|\geq\mathbf{1}$;\\
&
for $\overrightarrow{f}[\mathcal{N}(n)]$ -- \textit{false}\\
\hline &\\
for $\overleftarrow{f}(n)$ -- \textit{true}& for
$\overleftarrow{f}(\mathbf{n})$ -- \textit{false} in general\\
& and \textit{true} for $|\mathbf{n}|\geq\mathbf{1}$;\\
& for $\overleftarrow{f}[\mathcal{N}(n)]$ -- \textit{false}\\
\hline &\\
         for $f(n)$ -- \textit{true} & for $f(\mathbf{n})$ --
         \textit{true};\\
         &
for ${f}[\mathcal{N}(n)]$ -- \textit{false}\\
\hline &\\
         for $\overrightarrow{f}(\{-n,n\})$ -- \textit{true}
         &for $\overrightarrow{f}[\mathcal{X}(\mathbf{n})]$ -- \textit{false} in general\\
         & and \textit{true} for all $\mathbf{n}\geq\mathbf{1}$\\
\hline &\\
         for $\overleftarrow{f}(\{-n,n\})$ -- \textit{true}
         &for $\overleftarrow{f}[\mathcal{X}(\mathbf{n})]$ -- \textit{false} in general\\
         & and \textit{true} for all $\mathbf{n}\geq\mathbf{1}$\\
\hline &\\
         for ${f}(\{-n,n\})$ -- \textit{true}
         &for ${f}[\mathcal{X}(\mathbf{n})]$ -- \textit{false} in general\\
         & and \textit{true} for all $\mathbf{n}\geq\mathbf{1}$\\
\hline
        \end{tabular}
    \end{center}
\end{table}

In Table \ref{T2}, we survey the properties of the conditional expectations
for the numbers of state-crossings and crossings sets of states. We
give their comparison table for one-dimensional and two-dimensional
random walks. Specifically, we indicate whether or not the
aforementioned conditional expectations are the same. For instance,
the record ``for $\overrightarrow{f}(n)$ -- \textit{true}" means
that $\mathsf{E}\big\{\overrightarrow{f}(n)\big\}=c$ for all $n$ integer,
$-\infty<n<\infty$, where $c$ is the same constant for all $n$. (In some cases it is implied that $c=1$. However, for $\mathsf{E}\big\{\overrightarrow{f}(n)\big\}$ or $\mathsf{E}\big\{\overleftarrow{f}(n)\big\}$ related to the one-dimensional random walk the constant $c$ is a half.
It is also half for $\mathsf{E}\big\{\overrightarrow{f}(\mathbf{n})\big\}$ or $\mathsf{E}\big\{\overleftarrow{f}(\mathbf{n})\}$ for $|\mathbf{n}|\geq\mathbf{1}$.
Illustrating the only qualitative properties, we ignore this difference.)
In another record ``for
$\overrightarrow{f}(\mathbf{n})$ -- \textit{false} in general and
\textit{true} for $|\mathbf{n}|\geq\mathbf{1}$" means that
$\mathsf{E}\big\{\overrightarrow{f}(\mathbf{n})\big\}=c$ is incorrect in general
when $\mathbf{n}\in\mathbb{Z}^2\setminus\{\mathbf{0}\}$, but correct
if the subset $|\mathbf{n}|\geq\mathbf{1}$ is considered. Note, that
$\overrightarrow{f}(\{-n,n\})$, $\overleftarrow{f}(\{-n,n\})$ and
$f(n)$ are the one-dimensional analogues of
$\overrightarrow{f}[\mathcal{X}(\mathbf{n})]$,
$\overleftarrow{f}[\mathcal{X}(\mathbf{n})]$ and
$f[\mathcal{X}(\mathbf{n})]$, respectively,  or
$\overrightarrow{f}[\mathcal{N}(n)]$,
$\overleftarrow{f}[\mathcal{N}(n)]$ and $f[\mathcal{N}(n)]$,
respectively. The table does not include the results of Section \ref{S9} for multidimensional random walks in bounded areas considering them as specific.

The basic property of the paper that \textit{the expected number of state-crossings
$\mathbf{n}\in\mathbb{Z}^2\setminus\{\mathbf{0}\}$ is the same} can
be extended to a wider class of random walks considered in
\cite{Abramov_AoP}. For instance, for the family of conservative
random walks in \cite{Abramov_AoP} indicated as \textit{symmetric
random walks} (Model 1), the property is satisfied since the results in the Appendix hold true in the case of symmetric random walks as well.

Recall that the
aforementioned family of symmetric random walks is described by the system of
equations
\begin{eqnarray*}
\mathbf{S}_0&=&\mathbf{0},\label{eq-10.1}\\
\mathbf{S}_{t}&=&\mathbf{S}_{t-1}+\widetilde{\mathbf{e}}_t, \quad
t\geq1,\label{eq-10.2}
\end{eqnarray*}
where the vector $\widetilde{\mathbf{e}}_t$ is defined as follows.
As in \eqref{eq-1.1} and \eqref{eq-1.2}, it is one of the
vectors $\{\pm\mathbf{1}_{i}, i=1,2\}$ that is randomly
chosen. But the probability of this choice depend on $i$. That is,
for the vector $\mathbf{1}_i$ (or the vector $(-\mathbf{1}_i)$) to
be chosen this probability is $\alpha_i>0$, $i=1,2$, and
$2(\alpha_1+\alpha_2)=1$.

\appendix

\section{Appendix: Derivation of the formula for the transition probability from $\mathcal{N}(n)$ to $\mathcal{N}(n+1)$ for the random walk in $\mathbb{Z}^d$}

The results of the appendix are taken from \cite{Abramov_AoP}, pp. 1906--1908.

\subsection*{Description of the model} The random walk in $\mathbb{Z}^d$ is modelled by the $d$ independent queueing processes
as follows. Assume that arrivals in the $i$th queueing system are Poisson with rate $\lambda$,
and service times are exponentially distributed with the parameter $\lambda$. If a system
becomes free, it is switched for a special service, which is exponentially distributed with the same parameter $\lambda$. This service is
negative, and it results in a new customer in the queue. If during a negative service a
new arrival occurs, the negative service remains unfinished and not resumed.
The negative service models the reflection at zero and in fact implies the state-dependent
arrival rate, which becomes equal to $2\lambda$ at the moment when the system is
empty. It is associated with the situation, when an original one-dimensional random
walk reaches zero at some time moment $s$, and at the next time moment $s + 1$ it must
take one of the values $\pm1$ that corresponds to value $+1$ for an one-dimensional random
walk reflected at zero.

\subsection*{Queueing systems with finite capacity and characterization of the level $n$ probability} Assume that the number of waiting places in each of the queueing system is $N$. The assumption on limited number of waiting places means that an
arriving customer, who meets N customers in the system, is lost. For a vector $\mathbf{n}=\big(n^{(1)}, n^{(2)},\ldots, n^{(d)}\big)$ satisfying $\|\mathbf{n}\|<N$ let $P_N(\mathbf{n})$ denote the stationary probability to be in state $\mathbf{n}$ immediately before arrival of a customer in one of the $d$ queueing systems. Application of the PASTA property \cite{Wolff1} enables us to first obtain the stationary probability for each single system to obtain then the required stationary probability $P_N(\mathbf{n})$. Let $P_N^{(i)}(n)$ denote the stationary probability to be in state $n<N$ for the $i$th queueing system. We have
$$
P_N^{(i)}(n)=\begin{cases}\frac{2}{2N+1}, &\text{for} \quad 1\leq n\leq N,\\
\frac{1}{2N+1}, &\text{for} \quad n=0.
\end{cases}
$$
Hence the queueing systems are independent, the product form solution for $P_N(\mathbf{n})$ is
\begin{equation}\label{A.1}
P_N(\mathbf{n})=2^{d-d_0(\mathbf{n})}\frac{1}{(2N+1)^d},
\end{equation}
where $d_0(\mathbf{n})$ denotes the number of zero components in the presentation of the vector $\mathbf{n}$.

The total number
of vectors having norm $n$ in $\mathbb{Z}^d_+$
is
\begin{equation}\label{A.2}
\sum_{i=1}^{d}\binom{d}{i}\binom{n-1}{i-1}.
\end{equation}
(The formula sums over $i$ being the number of nonzero components
of the vector.)
Hence, denoting the stationary state probability to belong to the set $\mathcal{N}^+(n)$ by
$P_N[\mathcal{N}^+(n)]$, we have
\begin{equation}\label{A.3}
P_N[\mathcal{N}^+(n)]=\sum_{\mathbf{n}\in\mathcal{N}^+(n)}=\frac{1}{(2N+1)^d}\sum_{i=1}^{d}2^i\binom{d}{i}\binom{n-1}{i-1},
\end{equation}
where the term
$$
\sum_{i=1}^{d}2^i\binom{d}{i}\binom{n-1}{i-1}
$$
on the right-hand side of \eqref{A.3} characterizes the total number of vectors in $\mathbb{Z}^d$ having norm $n$.

It follows from \eqref{A.1} that for any two vectors $\mathbf{n}_1$ and $\mathbf{n}_2$ satisfying $\|\mathbf{n}_1\|<N$ and $\|\mathbf{n}_2\|<N$ we have
\begin{equation}\label{A.4}
\frac{P_N(\mathbf{n}_1)}{P_N(\mathbf{n}_2)}=2^{d_0(\mathbf{n}_2)-d_0(\mathbf{n}_1)}
\end{equation}
independently on $N$. Hence,
\begin{equation}\label{A.5}
\lim_{N\to\infty}\frac{P_N(\mathbf{n}_1)}{P_N(\mathbf{n}_2)}=2^{d_0(\mathbf{n}_2)-d_0(\mathbf{n}_1)}.
\end{equation}
For  $N=\infty$, let $P_\infty(\mathbf{n},\tau)$ be the probability that at time $\tau$ the system with infinite capacity is in state $\mathbf{n}$.
Then, for the limiting ratio of the final probabilities we also have
\begin{equation}\label{A.6}
\lim_{\tau\to\infty}\frac{P_\infty(\mathbf{n}_1,\tau)}{P_\infty(\mathbf{n}_2,\tau)}=2^{d_0(\mathbf{n}_2)-d_0(\mathbf{n}_1)}.
\end{equation}
The result in \eqref{A.6} is true,
since the limit in \eqref{A.5} due to Relation \eqref{A.4} is uniform, and interchanging the order of limits $\tau$ vs $N$ is correct. (The last can also be established directly \cite{Abramov_AoP}.)

Let $p_n(d)$ denote the transition probability from the set of states $\mathcal{N}^+
(n)$ (level $n$)
to the set of states $\mathcal{N}^+
(n+1)$ (level $n + 1$), and let $q_n(d)$ = $1-p_n(d)$ denote the
transition probability from the level $n$ to the level $n-1$.
Derive the formula for $p_n(d)$.

The total number of vectors in the set $\mathcal{N}^+
(n)$ is given by \eqref{A.2}. Each vector contains
$d$ components. Hence, the total number of components in the set of vectors in $\mathcal{N}^+
(n)$
is
\begin{equation}\label{A.7}
d\sum_{i=1}^{d}\binom{d}{i}\binom{n-1}{i-1}.
\end{equation}
Among them, the total number of zero components is
\begin{equation}\label{A.8}
\sum_{i=1}^{d-1}(d-i)\binom{d}{i}\binom{n-1}{i-1},
\end{equation}
and the total number of nonzero components is
\begin{equation}\label{A.9}
\sum_{i=1}^{d}i\binom{d}{i}\binom{n-1}{i-1}.
\end{equation}
To derive the formula for $p_n(d)$ let us build the sample space. The components of all
vectors in $\mathcal{N}^+
(n)$, the total number of which is given by \eqref{A.7}, are not equally likely.
According to \eqref{A.1}, a nonzero component appears with two times higher probability
than a zero component. To make the components equally likely, we are to extend the
number of nonzero components by factor 2. Then the total number of equally likely
components is to be equal to
\begin{equation}\label{A.10}
d\sum_{i=1}^{d}2^i\binom{d}{i}\binom{n-1}{i-1}.
\end{equation}

Following \eqref{A.10}, the sample space for level $n$ contains
$$
2d\sum_{i=1}^{d}2^i\binom{d}{i}\binom{n-1}{i-1},
$$
that is two times more than that given by \eqref{A.10}. This is because it include possible transitions from each state (component) in the two directions. Specifically, let
$C_0(n, d)$ denote the number of possible transitions associated with reflections at zero (the number of zero components given in \eqref{A.8} multiplied by two), and let $2C(n, d)$ be the rest of transitions, the half of which are associated with the transitions from level $n$ to $n+1$ and another half with the transitions from level $n$ to $n-1$. So, $C(n, d)$ is the number of nonzero components given in \eqref{A.9}

That is, the expression in \eqref{A.9} is presented as
\begin{equation}\label{A.11}
2d\sum_{i=1}^{d}2^i\binom{d}{i}\binom{n-1}{i-1}=C_0(n, d)+2C(n, d).
\end{equation}
Then, the total number of transitions from level $n$ to $n+1$ is $C_0(n, d)+C(n, d)$ and the total number of transitions from level $n$ to $n-1$ is $C(n,d)$. Then, from \eqref{A.8}, \eqref{A.9} and \eqref{A.11} we obtain
$$
\begin{aligned}
p_n(d)&=\frac{C_0(n,d)+C(n,d)}{C_0(n,d)+2C(n,d)}\\
&=\frac{2\sum_{i=1}^{d-1}(d-i)2^i\binom{d}{i}\binom{n-1}{i-1}+\sum_{i=1}^{d}i2^i\binom{d}{i}\binom{n-1}{i-1}}
{2\sum_{i=1}^{d-1}(d-i)2^i\binom{d}{i}\binom{n-1}{i-1}+2\sum_{i=1}^{d}i2^i\binom{d}{i}\binom{n-1}{i-1}}.
\end{aligned}
$$

% ----------------------------------------------------------------

\end{document}